\theoremstyle{plain}
\newtheorem{theorem}{\bf Theorem}[section]
\newtheorem{proposition}[theorem]{\bf Proposition}
\newtheorem{lemma}[theorem]{\bf Lemma}
\theoremstyle{definition}
\newtheorem{example}[theorem]{\bf Example}
\newtheorem{definition}[theorem]{\bf Definition}
\newtheorem{remark}[theorem]{\bf Remark}
\newcommand{\N}{\mathbb N}
\newcommand{\Z}{\mathbb Z}
\newcommand{\R}{\mathbb R}
\newcommand{\Q}{\mathbb Q}
\newcommand{\BF}{\text{\rm BF}}
 \DeclareMathOperator{\ord}{ord}
 \DeclareMathOperator{\Ca}{\mathsf {Ca}}
\DeclareMathOperator{\spec}{spec} \DeclareMathOperator{\supp}{supp}
\DeclareMathOperator{\Pic}{Pic} 
\DeclareMathOperator{\card}{card} \DeclareMathOperator{\Int}{Int}
\DeclareMathOperator{\End}{End}
\renewcommand{\time}{\negthinspace \times \negthinspace}
\newcommand{\DP}{\negthinspace : \negthinspace}
\newcommand{\red}{{\text{\rm red}}}
\newcommand{\fin}{{\text{\rm fin}}}
\renewcommand{\t}{\, | \,}
\newcommand{\canc}{\text{\rm canc}}
\newcommand{\LK}{\,[\![}
\newcommand{\RK}{]\!]}
\numberwithin{equation}{section}
\subjclass[2010]{20M13, 20M14; 11B13, 11R27, 13A05, 13F05, 13F15}
\begin{document}

\title{Factorization Theory in Commutative Monoids}

\author{Alfred Geroldinger  and Qinghai Zhong}

\address{University of Graz, NAWI Graz \\
Institute for Mathematics and Scientific Computing \\
Heinrichstra{\ss}e 36\\
8010 Graz, Austria}
\email{alfred.geroldinger@uni-graz.at, qinghai.zhong@uni-graz.at}
\urladdr{https://imsc.uni-graz.at/geroldinger, https://imsc.uni-graz.at/zhong}

\keywords{commutative monoids, Krull monoids, transfer Krull monoids,  factorizations, sets of lengths, catenary degrees}

\begin{abstract}
This is a survey on factorization theory. We discuss finitely generated monoids (including affine monoids), primary monoids (including numerical monoids), power sets with set addition, Krull monoids and their various generalizations, and the multiplicative monoids of domains (including Krull domains, rings of integer-valued polynomials, orders in algebraic number fields) and of their ideals. We offer  examples for all these classes of monoids and discuss their main arithmetical finiteness properties. These describe the structure of their sets of lengths, of the unions of sets of lengths, and their catenary degrees. We also provide examples where these finiteness properties do not hold.
\end{abstract}

\maketitle


\section{Introduction} \label{1}

Factorization theory emerged from algebraic number theory. The ring of integers of an algebraic number field is factorial if and only if it has class number one, and the class group was always considered as a measure for the non-uniqueness of factorizations. Factorization theory has  turned this idea into concrete results.
In 1960 Carlitz proved (and this is  a starting point of the area) that the ring of integers is half-factorial (i.e., all sets of lengths are singletons) if and only if the class number is at most two. In the 1960s Narkiewicz started a systematic study of counting functions associated with arithmetical properties in rings of integers. Starting in the  late 1980s,  theoretical properties of factorizations were studied in  commutative semigroups and in  commutative integral domains, with a focus on Noetherian  and Krull domains (see \cite{Ge88, Ge-Le90, HK90b}; \cite{An-An-Za90} is the first in a series of papers by Anderson, Anderson, Zafrullah, and  \cite{An97} is a conference volume from the 1990s).

From these beginnings factorization theory branched out, step by step,  into various subfields of algebra including commutative and non-commutative ring theory, module theory, and abstract semigroup theory and today  is considered as a structure theory of the arithmetic of a broad variety of objects. In this survey, we discuss finitely generated monoids (including affine monoids), Krull monoids (including Krull and Dedekind domains), power monoids (including the set of finite nonempty subsets of the nonnegative integers with set addition as its operation), strongly primary monoids (including numerical monoids and local one-dimensional Noetherian domains), and weakly Krull monoids (including orders in algebraic number fields). The main aim   of factorization theory is to
describe the various phenomena of non-uniqueness of factorizations by arithmetical invariants and to study the interdependence of these invariants and the classical algebraic invariants of the underlying algebraic structures. We discuss three long-term goals (\hyperlink{A}{Problem A}, \hyperlink{B}{Problem B}, and \hyperlink{C}{Problem C}) of this area.

It  turns out that abstract semigroup theory provides a most suitable frame for the formulation of arithmetic concepts, even for studying factorizations in domains. A reason for this lies in the use of one of its  main conceptual tools,  transfer homomorphisms.  Objects of interest $H$ are oftentimes studied via simpler objects $B$ and associated transfer homomorphisms $\theta \colon H \to B$, which allow one  to pull back arithmetical properties from $B$ to $H$ (see Definition \ref{4.4} and Proposition \ref{4.5}).

In Section \ref{2} we present semigroups from ring theory (semigroups of ideals and of modules) and power monoids (stemming from additive combinatorics), and we introduce the arithmetical concepts discussed later in the paper (including sets of lengths and their unions, sets of distances, and catenary degrees). Theorem \ref{3.1} in Section \ref{3} gathers  the main arithmetical finiteness results for finitely generated monoids. In the next sections, we present Krull monoids, transfer Krull monoids, and weakly Krull monoids. We offer examples of such monoids, discuss their arithmetical properties, show how some of them can be pulled back from finitely generated monoids (Theorem \ref{5.5}), and show that none of these arithmetical finiteness properties need to hold in general (Remark \ref{5.7}).

Various aspects of factorization theory could not be covered in this survey. These include factorizations in non-commutative rings and semigroups (\cite{Sm16a}), factorizations in commutative rings with zero-divisors (\cite{An-Ch11a}), the arithmetic of non-atomic,  non-BF, and non-Mori domains (\cite{An-Ma-Va12,Co-Go19a,Co-Ha18a}), and factorizations into distinguished elements that are not irreducible (e.g., factorizations into radical ideals and others  \cite{Fo-Ho-Lu13a, Sa-Za14, Ol-Re19a, Ol-Re20a,Ch-Re20a}).

\section{Background on  monoids and their arithmetic} \label{2}

We denote by $\N$ the set of positive integers. For rational numbers $a, b \in \Q$, $[a, b ] = \{ x \in \Z \colon a \le x \le b\}$ means the discrete interval between $a$ and $b$. For subsets $A, B \subset \Z$, $A+B = \{a+b \colon a \in A, b \in B \}$ denotes their sumset and, for every $k \in \N$, $kA = A + \ldots + A$ is the $k$-fold sumset of $A$. The set of distances $\Delta (A)$ is the set of all $d \in \N$ for which there is $a \in A$ such that $A \cap [a, a+d] = \{a, a+d\}$. If $A \subset \N_0$, then $\rho (A) = \sup (A \cap \N)/\min (A \cap \N) \in \Q_{\ge 1} \cup \{\infty\}$ denotes the elasticity of $A$ with the convention that  $\rho ( A)=1$ if $A \cap \N = \emptyset$. If $d \in \N$ and $M \in \N_0$, then a subset $L \subset \Z$ is called an {\it almost arithmetical progression} (AAP) with difference $d$ and bound $M$ if
\[
L = y + (L' \cup L^* \cup L'') \subset y + d \Z \,,
\]
where $y \in \Z$ is a shift parameter, $L^*$ is a nonempty arithmetical progression with difference $d$ such that $\min L^* = 0$, $L' \subset [-M,-1]$, and $L'' \subset \sup L^* + [1,M]$ (with the convention that $L''=\emptyset$ if $L^*$ is infinite).

\subsection{\bf Monoids.} \label{Monoids}
Let $H$ be a multiplicatively written commutative semigroup. We denote by $H^{\times}$ the group of invertible elements of $H$. We say that $H$ is reduced if $H^{\times}=\{1\}$ and we denote by $H_{\red}=\{aH^{\times}\mid a\in H\}$ the associated reduced semigroup of $H$.  The semigroup $H$ is said to be
\begin{itemize}
\item {\it cancellative} if $a,b,u \in H$ and $au=bu$  implies that $a=b$;
\item {\it unit-cancellative} if $a,u\in H$ and $a=au$ implies that $u\in H^{\times}$.
\end{itemize}
By definition, every cancellative semigroup is unit-cancellative. If $H$ is a unit-cancellative semigroup, then we define, for two elements $a, b \in H$, that $a \sim b$ if there is $c \in H$ such that $ac=bc$. This is a congruence relation on $H$ and the monoid $H_{\canc} = H/\!\!\sim$ is the associated cancellative monoid of $H$.
If $H$ is cancellative, then $\mathsf q (H)$ denotes the quotient group of $H$,
\begin{itemize}
\item $\widehat H = \{ x \in \mathsf q (H) \colon \text{there is $c \in H$ such that $cx^n \in H $ for all $n \in \N$} \}$
       is the {\it complete integral closure} of $H$, and
\item $\widetilde H = \{ x \in \mathsf q (H) \colon x^n \in H \ \text{for some} \ n \in \N \}$ is the {\it root closure} (also called the {\it normalization}) of $H$.
\end{itemize}
We say that $H$ is {\it completely integrally closed} if $H = \widehat H$ and that it is {\it root closed} (or {\it normal}) if $H = \widetilde H$. For a set $P$, let $\mathcal F (P)$ denote the free abelian monoid with basis $P$. Every $a \in \mathcal F (P)$ has a unique representation in the form
\[
a = \prod_{p \in P} p^{\mathsf v_p (a)} \,,
\]
where $\mathsf v_p \colon H \to \N_0$ is the $p$-adic valuation of $a$. We call $|a| = \sum_{p \in P} \mathsf v_p (a) \in \N_0$ the {\it length} of $a$ and $\supp (a) = \{ p \in P \colon \mathsf v_p (a) > 0 \} \subset P$ the {\it support} of $a$.

\medskip
\centerline{\it Throughout this paper, a monoid means a }
\centerline{\it commutative unit-cancellative semigroup with identity element.}
\medskip

Let $H$ be a monoid. For two elements $a, b \in H$ we say that $a$ divides $b$ (we write $a \t b$) if $b \in aH$ and if $aH = bH$ (equivalently, $aH^{\times} = b H^{\times}$), then $a$ and $b$ are called {\it associated} (we write $a \simeq b$). The element $a$ is called {\it irreducible} (or an {\it atom}) if $a = bc$ with $b,c \in H$ implies that $b \in H^{\times}$ or $c \in H^{\times}$. We denote by $\mathcal A (H)$ the set of atoms of $H$. A submonoid $S \subset H$ is said to be {\it divisor-closed} if every divisor $a \in H$ of some element $b \in S$ lies in $S$. A monoid homomorphism $\varphi \colon H \to D$ is a {\it divisor homomorphism} if $a, b \in H$ and $\varphi (a) \t \varphi (b)$ (in $S$) implies that $a \t b$ (in $H$). If the inclusion $H \hookrightarrow D$ is a divisor homomorphism, then $H$ is called a {\it saturated} submonoid of $D$. For a subset $E \subset H$ we denote by
\begin{itemize}
\item $[E] \subset H$ the smallest submonoid of $H$ containing $E$, and by
\item $\LK E \RK \subset H$ the smallest divisor-closed submonoid of $H$ containing $E$.
\end{itemize}
Clearly, $\LK E \RK$ is the set of all $a \in H$ dividing some element $b \in [E]$. If $E = \{a_1, \ldots, a_m\}$, then we write $[a_1, \ldots, a_m] = [E]$ and $\LK a_1, \ldots, a_m \RK = \LK E \RK $.

A subset $\mathfrak a \subset H$ is an $s$-ideal if $\mathfrak a H = \mathfrak a$ and $H$ is {\it $s$-Noetherian} if $H$ satisfies the ACC (ascending chain condition) on $s$-ideals. We denote by $s$-$\spec (H)$ the set of prime $s$-ideals and by $\mathfrak X (H) \subset s$-$\spec (H)$ the set of minimal nonempty prime $s$-ideals. Suppose that $H$ is cancellative. For subsets $A, B \subset H$, $(A \DP B ) = \{ c \in \mathsf q (H) \colon cB \subset A \}$ and $A_v = \big( H \colon (H \DP A) \big)$ is the {\it $v$-ideal}  (or {\it divisorial ideal}) generated by $A$. If $A_v = A$, then $A$ is a divisorial ideal. The monoid $H$ is a {\it Mori monoid} (or  {\it $v$-Noetherian}) if it satisfies the ACC for divisorial ideals. If $A, B \subset H$ are divisorial ideals, then $A \cdot_v B = (AB)_v$ is the $v$-product of $A$ and $B$. We denote by $\mathcal I_v (H)$ the semigroup of divisorial ideals of $H$ (equipped with $v$-multiplication) and by $\mathcal I_v^* (H)$ the subsemigroup of $v$-invertible divisorial ideals. Clearly, $\mathcal I_v^* (H)$ is cancellative and if $H$ is a Mori monoid, then $\mathcal I_v (H)$ is a monoid and $\mathcal I_v^* (H)$ is a Mori monoid. For any undefined concepts in ideal theory we refer to \cite{HK98}.

The monoid $H$ is said to be {\it finitely generated} if there is a finite set $E \subset H$ such that $H = [E]$. Every finitely generated monoid is $s$-Noetherian and the converse holds if $H$ is cancellative (\cite[Theorem 3.6]{HK98}).
A monoid is called {\it affine} if it is finitely generated and isomorphic to a submonoid of a finitely generated free abelian group (equivalently, a commutative semigroup is  affine    if it is reduced, cancellative, finitely generated, and its quotient group is torsion-free).

If not stated otherwise, then a ring means a commutative ring with identity element. Let $R$ be a ring. Then $R^{\bullet}$ denotes the semigroup of regular elements, and $R^{\bullet}$ is a cancellative monoid. Rings with the property that $au=a$ implies that $u \in R^{\times}$ or $a=0$ are called pr{\'e}simplifiable  in \cite{An-Al17a}.
Ring theory gives rise to the following classes of monoids that are of central interest in factorization theory.

\begin{example}[\bf Monoids from ring theory] \label{2.1}~

1. (Semigroups of ideals) Let $R$ be a commutative integral domain. We denote by $\overline R$ its integral closure and by $\widehat R$ its complete integral closure. Further, let $\mathcal H (R)$ be the semigroup of nonzero principal ideals,  $\mathcal I^* (R)$ be the semigroup of invertible ideals,   $\mathcal I (R)$ be the semigroup of all nonzero ideals,  and $\mathcal F (R)$ be the semigroup of nonzero fractional ideals, all equipped with usual ideal multiplication. Then $\mathcal F (R)^{\times}$, the group of units of $\mathcal F (R)$, is the group of invertible fractional ideals and this is the quotient group of $\mathcal I^* (R)$.  Furthermore,  $\mathcal H (R) \cong (R^{\bullet})_{\red}$, the inclusion $\mathcal H (R) \hookrightarrow \mathcal I^* (R)$ is a cofinal divisor homomorphism, $\mathcal I^* (R) \subset \mathcal I (R)$ is a divisor-closed submonoid, the prime elements of $\mathcal I^* (R)$ are precisely the invertible prime ideals, and $\Pic (R) = \mathcal F (R)^{\times}/\mathsf q (\mathcal H (R))$ is the {\it Picard group} of $R$. Suppose that $R$ is Noetherian. If $I, J \in \mathcal I (R)$ with $IJ = I$, then $IJ^n = I$ whence $\{0\} \ne I \subset \cap_{n \ge 0}J^n$. Since $R$ satisfies Krull's Intersection Theorem, it follows that $J=R$. Thus $\mathcal I (R)$ is unit-cancellative whence a monoid in the present sense.

The above constructions generalize to monoids of $r$-ideals for general ideal systems $r$ (the interested reader may want to consult \cite{HK98, HK11b,Re12a}). In the present paper we restrict ourselves to usual ring ideals, to usual semigroup ideals ($s$-ideals), and to divisorial ideals of monoids and domains. We use that the semigroup $\mathcal I_v (R)$  of divisorial ideals of $R$ (respectively the monoid $\mathcal I_v^* (R)$ of $v$-invertible divisorial ideals of $R$) are isomorphic to the semigroup  $\mathcal I_v (R^{\bullet})$  of divisorial ideals of $R^{\bullet}$ (respectively to the monoid $\mathcal I_v^* (R^{\bullet})$ of $v$-invertible divisorial ideals of $R^{\bullet}$). In particular,  $R$ is a Mori domain if and only if its monoid $R^{\bullet}$ is a Mori monoid.

Atomic domains $R$ having only finitely many non-associated atoms are called {\it Cohen-Kaplansky domains} and they are characterized by each of the following equivalent properties (\cite[Theorem 4.3]{An-Mo92}){\rm \,:}
\begin{itemize}
\item[(a)] $\mathcal H (R)$ is finitely generated.
\item[(b)] $\mathcal I (R)$ is finitely generated.
\item[(c)] $\mathcal I^* (R)$ is finitely generated.
\item[(d)] $\overline R$ is a semilocal principal ideal domain, $\overline R/(R \DP \overline R)$ is finite, and $|\max (R)|= |\max (\overline R)|$.
\end{itemize}

2. (Semigroups of modules) Let $R$ be a not necessarily commutative ring and let $\mathcal C$ be a class of right $R$-modules that is closed under finite direct sums, direct summands, and isomorphisms. This means, whenever $M, M_1, M_2$ are $R$-modules with $M \cong M_1 \oplus M_2$, then $M$ lies in $\mathcal C$ if and only if $M_1$ and $M_2$ lies in $\mathcal C$. Let $\mathcal V (\mathcal C)$ be the set of isomorphism classes of $\mathcal C$ (we tacitly assume that this is indeed a set) where the operation is induced by forming direct sums. Then $(\mathcal V (\mathcal C), +)$ is a reduced commutative semigroup. By a result of Bergman-Dicks (\cite[Theorems 6.2 and 6.4]{Be74a} and \cite[page 315]{Be-Di78}), every commutative reduced semigroup $S$ with $S = \LK a \RK$ for some $a \in S$ is isomorphic to a semigroup of modules (indeed, one may take the class of finitely generated projective right $R$-modules over a hereditary $k$-algebra).

If each module $M$ in $\mathcal C$ is Noetherian (or artinian), then it is a finite direct sum of indecomposable modules and hence $\mathcal V (\mathcal C)$ is atomic. If the endomorphism rings $\End_R (M)$ are local for all indecomposable  modules in $\mathcal C$, then direct sum decomposition is unique whence $\mathcal V ( \mathcal C)$ is free abelian (in other words, the Krull-Remak-Schmidt-Azumaya Theorem holds). A module is said to be directly finite (or Dedekind finite) if it is not isomorphic to a proper direct summand of itself (\cite{Go79a, La99a}). Thus the semigroup $\mathcal V (\mathcal C)$ is unit-cancellative (whence a monoid in the present sense) if and only if all modules in $\mathcal C$ are directly finite.
The idea, to look at direct-sum decomposition of modules, from the viewpoint of factorization theory was pushed forward by Facchini, Wiegand, et al (for a survey see \cite{Ba-Wi13a}). We meet semigroups of modules again in  Example \ref{4.2}.4.
\end{example}

We end this subsection with a class of monoids stemming from additive combinatorics.

\begin{example}[\bf Power monoids] \label{2.2}~
Let $H$ be an {\it additively written} torsion-free monoid. The {\it power monoid} $\mathcal P_{\fin} (H)$ of $H$ is the set of all finite nonempty subsets of $H$, endowed with set addition as operation (thus, if $A, B \in \mathcal P_{\fin} (H)$, then $A+B = \{a+b \colon a \in A, b \in B \}$ is the sumset of $A$ and $B$). Clearly, $\mathcal P_{\fin} (H)$ is a commutative semigroup and if $0_H \in H$ is the identity element of $H$, then $\{0_H\}$ is the identity element of $\mathcal P_{\fin} (H)$. The subset $\mathcal P_{\fin, \times} (H) \subset \mathcal P_{\fin} (H)$, which consists of those finite nonempty subsets  $A \subset H$ with $A \cap H^{\times} \ne \emptyset$, is a divisor-closed submonoid of $\mathcal P_{\fin} (H)$, called the {\it restricted power monoid} of $H$. Power monoids of monoids were introduced by Tringali et al. and studied in an abstract framework  (\cite{Fa-Tr18a, An-Tr20a, Tr20}). For simplicity of presentation, we  restrict ourselves to $\mathcal P_{\fin} (\N_0)$ and to   $\mathcal P_{\fin, 0} (\N_0)$ consisting of all finite nonempty subsets of $\N_0$ containing $0$. Finite nonempty subsets of the (nonnegative) integers and their sumsets are the primary objects of study in arithmetic combinatorics (\cite{Ta-Vu06,Ge-Ru09,Gr13a}).
\end{example}

\subsection{\bf Arithmetical concepts.} \label{arithmetical}
Let $H$ be a monoid. The free abelian monoid $\mathsf Z (H) = \mathcal F ( \mathcal A (H_{\red}))$ is the {\it factorization monoid} of $H$ and $\pi \colon \mathsf Z (H) \to H_{\red}$, defined by $\pi (u) = u$ for all $u \in \mathcal A (H_{\red})$, is the {\it factorization homomorphism} of $H$. For an element $a \in H$,
\begin{itemize}
\item $\mathsf Z_H (a) = \mathsf Z (a) = \pi^{-1} (aH^{\times}) \subset \mathsf Z (H)$ is the {\it set of factorizations} of $a$, and
\item $\mathsf L_H (a) = \mathsf L (a) = \{ |z| \colon z \in \mathsf Z (a) \} \subset \N_0$ is the {\it set of lengths} of $a$.
\end{itemize}
Thus, $\mathsf L (a) = \{0\}$ if and only if $a \in H^{\times}$ and $\mathsf L (a) = \{1\}$ if and only if $a \in \mathcal A (H)$. Then
\[
\mathcal L (H) = \{ \mathsf L (a) \colon a \in H \}
\]
is the {\it system of sets of lengths} of $H$. The monoid $H$ is said to be
\begin{itemize}
\item {\it atomic} if $\mathsf Z (a) \ne \emptyset$ for all $a \in H$ (equivalently, every non-invertible element of $H$ can be written as a finite product of atoms of $H$);

\item {\it factorial} if $|\mathsf Z (a)|=1$ for all $a \in H$;

\item {\it half-factorial} if $|\mathsf L (a)|=1$ for all $a \in H$;

\item a BF-{\it monoid} (bounded factorization monoid) if $\mathsf L (a)$ is finite and nonempty for all $a \in H$.
\end{itemize}
A monoid $H$ is factorial if and only if $H_{\red}$ is free abelian.
Every Mori monoid is a BF-monoid, every BF-monoid satisfies the ACC on principal ideals, and every monoid satisfying the ACC on principal ideals is atomic. The main focus of factorization theory is  on BF-monoids and this will also be the case in the present paper. For any undefined notion we refer to \cite{Ge-HK06a}.

Suppose that $H$ is a BF-monoid. Then $\mathcal L (H) \subset \mathcal P_{\fin} (\N_0)$ and for any subset $\mathcal L \subset \mathcal P_{\fin} (\N_0)$ we define the following invariants describing the structure of $\mathcal L$. We denote by
\[
\Delta (\mathcal L) = \bigcup_{L \in \mathcal L } \Delta (L) \subset \N \quad \text{the {\it set of distances} of} \ \mathcal L \,,
\]
and by
\[
\mathcal R (\mathcal L) = \{\rho (L) \colon L \in \mathcal L  \} \subset \Q_{\ge 1}  \quad \text{the {\it set of elasticities} of} \ \mathcal L \,.
\]
Then $\rho (\mathcal L) = \sup \mathcal R (\mathcal L)$ is called the {\it elasticity} of $\mathcal L$ and we say that the elasticity is accepted if there is an $L \in \mathcal L$ with $\rho (L)=\rho (\mathcal L)$.    For every $k \in \N_0$,
\[
\mathcal U_k (\mathcal L) = \bigcup_{k \in L \in \mathcal L } L \subset \N \quad \text{is the {\it union of sets of $\mathcal L$} containing $k$} \,,
\]
and $\rho_k (\mathcal L) = \sup \mathcal U_k (\mathcal L)$ is the {\it $k$-th elasticity} of $\mathcal L$.
If $\mathcal L = \mathcal L (H)$, then we briefly set $\Delta (H) = \Delta \bigl( \mathcal L (H) \bigr)$ and similarly for the other invariants. Thus, by definition,  $H$ is half-factorial if and only if $\Delta (H) = \emptyset$ if and only if $\mathcal R (H) = \{1\}$. Furthermore, if $H \ne H^{\times}$, then $\mathcal U_0 (H) = \{0\}$ and $\mathcal U_1 (H) = \{1\}$.

In many settings unions of sets of lengths as well as sets of lengths have a well-defined structure. For their description we need the concept of an AAMP (almost arithmetical multiprogression).  Let $d \in \N$, \ $M \in \N_0$ \ and \ $\{0,d\} \subset \mathcal D
\subset [0,d]$. A subset $L \subset \Z$ is called an  {\rm AAMP}  with \ {\it difference} \ $d$, \ {\it period} \ $\mathcal D$,
      \  and \ {\it bound} \ $M$, \ if
\[
L = y + (L' \cup L^* \cup L'') \, \subset \, y + \mathcal D + d \Z \,, \quad \text{where}
\]
\begin{itemize}
\item  $L^*$ is finite nonempty with $\min L^* = 0$ and $L^* =
       (\mathcal D + d \Z) \cap [0, \max L^*]$, and

\item  $L' \subset [-M, -1]$,  \ $L'' \subset \max L^* + [1,M]$, and $y \in \Z$.
\end{itemize}

Next we define a distance function on the set of factorizations $\mathsf Z (H)$. Two factorizations $z, z' \in \mathsf Z (H)$ can be written in the form
\[
z = u_1 \cdot \ldots \cdot u_{\ell}v_1 \cdot \ldots \cdot v_m \quad \text{and} \quad z' = u_1 \cdot \ldots \cdot u_{\ell}w_1 \cdot \ldots \cdot w_n \,,
\]
where $\ell, m, n \in \N_0$ and $u_1, \ldots, u_{\ell}, v_1, \ldots, v_m, w_1, \ldots, w_n \in \mathcal A (H_{\red})$ are such that $\{v_1, \ldots, v_m\} \cap \{w_1, \ldots, w_n\} = \emptyset$. Then $\mathsf d (z,z') = \max \{m,n\} \in \N_0$ is the {\it distance} between $z$ and $z'$. If $z \ne z'$ with $\pi (z) = \pi (z')$, then
\begin{equation} \label{dist-1}
1 + \big| |z| - |z'| \big| \le \mathsf d (z,z') \quad \text{respectively} \quad 2 + \big| |z| - |z'| \big| \le \mathsf d (z,z')
\end{equation}
if $H$ is cancellative. Let $a \in H$ and $N \in \N_0$. A finite sequence $z_0, \ldots, z_k \in \mathsf Z (a)$ is called an $N$-chain of factorizations if $\mathsf d (z_{i-1}, z_i) \le N$ for all $i \in [1,k]$. Then $\mathsf c_H (a) = \mathsf c (a)$ is the smallest $N \in \N_0 \cup \{\infty\}$ such that any two factorizations $z, z' \in \mathsf Z (a)$ can be concatenated by an $N$-chain. The set
\[
\Ca (H) = \{\mathsf c (a) \colon a \in H \ \text{with} \ \mathsf c (a) > 0 \} \subset \N_0
\]
is the {\it set of (positive) catenary degrees} of $H$ and $\mathsf c (H) = \sup \Ca (H) \in \N_0 \cup \{\infty\}$ is the {\it catenary degree} of $H$. If $H$ is not half-factorial, then the inequalities of \eqref{dist-1} imply that
\begin{equation} \label{dist-2}
1 + \sup \Delta (H) \le \mathsf c (H) \quad \text{resp.} \quad 2 + \sup \Delta (H) \le \mathsf c (H)
\end{equation}
if $H$ is cancellative.

\section{Finitely generated monoids} \label{3}

By Redei's Theorem, every finitely generated commutative semigroup is finitely presented. The idea to describe arithmetical invariants in terms of relations was pushed forward by Chapman and Garc{\'i}a-S{\'a}nchez (\cite{C-G-L-P-R06, C-G-L09} are the first papers in this direction). This point of view laid the foundation for the development of algorithms computing arithmetical invariants in finitely generated monoids (we refer to \cite{GS16a} for a survey, and to \cite{GG-MF-VT15, Ph15a} for a sample of further work in this direction). In particular, for numerical monoids there is a wealth of papers providing algorithms for determining arithmetical invariants and in some cases there are  even  precise values (formulas) for arithmetical invariants (in terms of the atoms or of other algebraic invariants; \cite{MR3493240, GS-ON-We19}). A further class of objects, for which precise formulas for arithmetical invariants are available, will be discussed in Section \ref{6}.

Our first result summarizes the main arithmetical finiteness properties of finitely generated monoids. Its proof is (implicitly) based on Dickson's Lemma stating that a subset of $\N_0^s$ has only finitely minimal points.

\begin{theorem}[\bf Arithmetic of finitely generated monoids] \label{3.1}
Let $H$ be a monoid such that $H_{\red}$ is finitely generated.
\begin{enumerate}
\item The set of  catenary degrees and the set of distances   are finite and $\rho (H) \in \Q$. If $H$ is cancellative, then the elasticity is accepted and there is some $r \in \R_{\ge 1}$ such that $\{ q \in \Q \colon r \le q \le \rho (H) \} \subset \mathcal R (H)$; moreover, $r$ is the only possible limit point of $\{\rho (L) \colon L \in \mathcal L (H) \ \text{with} \ \rho (L) < r\}$.

\item There is $M \in \N_0$ such that, for all $k \in \N$,  the unions $\mathcal U_k (H)$ are finite {\rm AAP}s with difference $\min \Delta (H)$ and bound $M$.

\item If $H$ is cancellative, then there is $M \in \N_0$ such that every $L \in \mathcal L (H)$ is an {\rm AAMP} with difference $d \in \Delta (H)$ and bound $M$.
\end{enumerate}
\end{theorem}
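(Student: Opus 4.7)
The plan is to reduce immediately to the reduced monoid $H_{\red}$, which is finitely generated, and to work with the factorization homomorphism $\pi \colon \mathsf Z(H) = \mathcal F(\mathcal A(H_{\red})) \to H_{\red}$. Write $\mathcal A(H_{\red}) = \{u_1,\ldots,u_t\}$, so $\mathsf Z(H) \cong \N_0^t$. The fundamental object is the congruence $\sim_{\pi}$ on $\N_0^t$ defined by $z \sim_{\pi} z'$ iff $\pi(z)=\pi(z')$. By Redei's theorem (equivalently, by Dickson's Lemma applied to the pairs $(z,z')$ with disjoint support, viewed as a subset of $\N_0^{2t}$), this congruence is generated by a \emph{finite} set $\mathcal R$ of minimal relations $(z_i,z_i')$, and each $|z_i|,|z_i'|$ is bounded by some constant $N_0$. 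This finite set of minimal relations is the engine that drives all three parts.

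For part (1), note that any two factorizations of the same element are connected by a chain of single-relation rewrites; each rewrite has distance at most $N_0$, so $\mathsf c(a) \le N_0$ for all $a \in H$, which proves finiteness of $\Ca(H)$, and finiteness of $\Delta(H)$ follows from \eqref{dist-2}. In the cancellative case, $H_{\red}$ embeds into its (torsion-free) quotient group, and one sees $\rho(H) = \max\{|z_i|/|z_i'| : (z_i,z_i') \in \mathcal R\}$, realized by a concrete pair of factorizations; hence $\rho(H) \in \Q$ and is accepted. The structural statement about $\mathcal R(H)$ follows by taking the pair $(z^*,z'^*)$ attaining $\rho(H)$, combining it with a fixed half-factorial power $w$, and perturbing: the elasticities $\rho(\mathsf L(\pi(z^*)^n w^m))$ sweep out a dense set of rationals in $[r,\rho(H)]$ for an appropriate $r$ coming from the second-largest ratio in $\mathcal R$, while elasticities below $r$ come from a finite set of relation pairs and accumulate only at $r$.

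For part (2), I would use that $\mathcal U_k(H)$ is determined by the set of pairs $(z,z') \in (\N_0^t)^2$ with $z \sim_{\pi} z'$ and $|z|=k$. The set of such differences $|z'|-|z|$, as $k$ and $(z,z')$ vary, is governed by the finite generating set $\mathcal R$. Applying Dickson's Lemma to the configurations realizing a given $(k,\ell) \in \mathcal U_k$ shows that for $k$ large enough, $\mathcal U_k(H)$ eventually fills the arithmetic progression with difference $\min\Delta(H)$ between its extreme values, up to a bounded defect at each end; combined with direct estimates $\rho_k(H) \le \rho(H) k$ and the finitely many small exceptional $k$, one extracts a single $M \in \N_0$ that works for all $k$.

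Part (3) is the main obstacle. The difficulty is uniformity: we need one $M$ that bounds the defect of \emph{every} $\mathsf L(a)$, even though sets of lengths can be arbitrarily long. My approach is to consider the set of ``bad patterns'': triples $(a,y,L')$ where $y+L' \subset \mathsf L(a)$ and $L'$ violates the AAMP shape with difference $d \in \Delta(H)$ at scale $M$. Encode each such bad occurrence via finitely many factorizations of $a$; the collection of vectors in $\N_0^{ct}$ (for some $c$) encoding bad occurrences has, by Dickson's Lemma, only finitely many minimal elements. I would then argue that every bad occurrence decomposes as a minimal bad occurrence plus a product of $\sim_{\pi}$-relations, and show that attaching relations cannot destroy an AAMP structure at scale larger than a constant depending only on $\mathcal R$ and the finite set of minimal bad configurations. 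The delicate point is showing that ``adding relations preserves asymptotic periodicity with the prescribed difference set $\Delta(H)$''; this forces $d$ to lie in $\Delta(H)$ and produces the uniform bound $M$. Once this compatibility lemma is in place, the AAMP structure for all $a$ with sufficiently large $\max \mathsf L(a)$ follows, and finitely many remaining $a$ are absorbed into $M$.
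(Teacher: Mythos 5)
Your ``engine'' --- the claim that the kernel congruence $\ker\pi=\{(z,z')\in\mathsf Z(H)\times\mathsf Z(H)\colon\pi(z)=\pi(z')\}$ is generated by the Dickson-minimal pairs with disjoint support, each relation decomposing into such minimal ones --- breaks down exactly where the theorem is delicate: $H$ is only unit-cancellative. Splitting a relation $(z,z')$ at a smaller relation $(w,w')\le(z,z')$ requires cancelling $\pi(w)$ from $\pi(w)\pi(zw^{-1})=\pi(w)\pi(z'w'^{-1})$, which is unavailable. Concretely, for the reduced, finitely generated, unit-cancellative monoid $H=\langle u,v\mid u^2=uv\rangle$ (whose elements are $v^b$ and $uv^c$), one has $u^av^b=u^cv^d$ if and only if $a,c\ge 1$ and $a+b=c+d$ (or $a=c=0$, $b=d$); every nontrivial relation therefore has both first coordinates positive, so the set of disjoint-support relations is \emph{empty} while $\ker\pi$ is nontrivial. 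This is precisely the obstruction the paper's proof is designed to circumvent: it works not with $\ker\pi$ but with the saturated submonoid $S=\{(x,y)\colon\exists z,\ \pi(xz)=\pi(yz)\}$ of $\mathsf Z(H)\times\mathsf Z(H)$, extracts a finite set $S^*$ via the ACC on $s$-ideals of $\mathsf Z(H)$ (Dickson), and runs a minimal-counterexample induction in which the auxiliary witness $z$ is carried along to compensate for the missing cancellation. Your catenary-degree bound can be salvaged by invoking Redei's theorem as a genuine black box (every congruence on $\N_0^t$ is finitely generated, with no minimality or disjoint-support claim): an elementary rewrite chain then stays inside $\mathsf Z(a)$ and each step has distance at most the size of the largest defining relation. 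But as written the generating set is misidentified, and with it the formula $\rho(H)=\max|z_i|/|z_i'|$; note also that $\rho(H)\in\Q$ is asserted \emph{without} the cancellativity hypothesis, whereas your argument for rationality only covers the cancellative case.

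For parts (2) and (3) you have outlined a strategy rather than given a proof. In (2), ``applying Dickson's Lemma to the configurations realizing a given $(k,\ell)$ shows that $\mathcal U_k(H)$ eventually fills the arithmetic progression'' is a restatement of the conclusion, not an argument: the substance is to produce, uniformly in $k$, elements whose lengths fill in the progression of difference $\min\Delta(H)$ between $\min\mathcal U_k(H)$ and $\max\mathcal U_k(H)$ up to a defect bounded independently of $k$. In (3) you yourself flag the compatibility lemma (``adding relations cannot destroy an AAMP structure at scale larger than a constant'') as the delicate point and leave it unproven; that lemma \emph{is} the Structure Theorem for Sets of Lengths, whose known proof for finitely generated cancellative monoids proceeds via pattern ideals and tameness bounds and is far from a routine consequence of Dickson's Lemma. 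The paper disposes of both parts by citation (to \cite[Theorem 3.6]{F-G-K-T17} and \cite[Theorem 4.4.11]{Ge-HK06a}); if you intend to reprove them, these missing cores must be supplied.
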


\begin{proof}
1. Suppose that the set of catenary degrees is finite. Then \eqref{dist-2} implies that the set of distances is finite. The elasticity $\rho (H)$ is rational by  \cite[Proposition 3.4]{F-G-K-T17}. Now suppose in addition that $H$ is cancellative. Then the elasticity $\rho (H)$ is accepted by \cite[Theorem 3.1.4]{Ge-HK06a} (this does not hold true in general if $H$ is not cancellative). The claim on the structure of $\mathcal R (H)$ was proved in \cite{Zh19a}.

Now we show   that the catenary degree $\mathsf c (H)$ is finite. We may assume that $H$ is reduced and we denote by  $\pi \colon \mathsf Z (H) \to H$  the factorization homomorphism. We consider the submonoid
\[
S=\{(x,y)\in \mathsf Z (H) \times\mathsf Z (H) \colon \text{ there exists } z\in \mathsf Z (H) \text{ such that }\pi(xz)=\pi(yz)\} \subset \mathsf Z (H) \times \mathsf Z (H)
\]	
and start with the following assertion.
\begin{enumerate}
\item[{\bf A.}\,]  The set
    \begin{align*}
S^*=\{(x,y)\in S\colon &\text{ there exists }z\in \mathsf Z (H) \text{ such that }\pi(xz)=\pi(yz), \text{ but for all } \\
&(x_1,y_1)\in S\setminus\{(1,1),(x,y)\} \text{ with } (x_1,y_1)\mid_S (x,y), \text{ we have } \pi(x_1z)\neq \pi(y_1z)
\}
	\end{align*}
is finite.
\end{enumerate}

{\it Proof of \,{\bf A}}.\, Assume to the contrary that $S^*\subset \mathsf Z (H) \times \mathsf Z (H)$ is infinite. Then there exists a sequence $(x_i,y_i)_{i \ge 1}$ 	with terms from $S^*$ such that $(x_i,y_i)\neq (x_j,y_j)$ and $(x_i,y_i)\mid_{\mathsf Z (H) \times \mathsf Z (H)} (x_j,y_j)$ for distinct $i,j\in \N$ with $i<j$. Since $S\subset \mathsf Z (H) \times \mathsf Z (H)$ is saturated, we have $(x_i,y_i)\mid_S(x_j,y_j)$ for distinct $i,j\in \N$ with $i<j$. For every $i\in \N$, we define
\[
\mathfrak a_i=\{z\in \mathsf Z (H) \colon \text{ there exists} \  (x',y')\in S\setminus\{(1,1)\} \text{ with } (x',y')\mid_S (x_i,y_i)
\text{ such that } \pi(x'z)=\pi(y'z) \}\,.
\]
Then $(\mathfrak a_i)_{i \ge 1}$ is an ascending chain of  $s$-ideals of $\mathsf Z (H)$. Since $\mathsf Z (H)$ is finitely generated, every ascending chain of  $s$-ideals of $\mathsf Z (H)$ is stationary (this proof uses  Dickson's Lemma). Thus there exists $N\in \N$ such that $\mathfrak a_N=\mathfrak a_{N+1}$. Therefore for every $z\in \mathsf Z (H)$ with $\pi(x_{N+1}z)=\pi(y_{N+1}z)$ we have $z \in \mathfrak a_{N+1}=\mathfrak a_N$. By definition of $\mathfrak a_N$,  there is $(x',y')\in S\setminus\{(1,1)\}$ with $(x',y')\mid_S(x_N,y_N)$ such that $\pi(x'z)=\pi(y'z)$, a contradiction to $(x_{N+1},y_{N+1})\in S^*$. \qed[Proof of {\bf A.}]

We assert that
\[
\mathsf c(H)\le M:=\max\{\mathsf d(x,y)\colon (x,y)\in S^* \} \,.
\]
It suffices to prove that for all $(x,y)\in S$ and for all $z\in \mathsf Z (H)$ with $\pi(xz)=\pi(yz)$, there exists an $M$-chain concatenating $xz$ and $yz$. Assume to the contrary that this does not hold and let $(x,y)\in S$ be a counter example for which $|x|+|y|$ is minimal. Let $z\in \mathsf Z (H)$ with $\pi(xz)=\pi(yz)$.
If 	$(x,y)\in S^*$, then $\mathsf d(xz,yz)=\mathsf d(x,y)\le M$, a contradiction.
Thus $(x,y)\not\in S^*$ and hence there exists $(x',y')\in S\setminus\{(1,1),(x,y)\} $ with $(x',y')\mid_S (x,y)$
such that $\pi(x'z)=\pi(y'z)$. Then $|x'|+|y'|<|x|+|y|$ and $|xx'^{-1}|+|yy'^{-1}|<|x|+|y|$ imply that there exist an $M$-chain concatenating $xz=x'(xx'^{-1})z$ and $y'(xx'^{-1})z$ and an $M$-chain concatenating $y'(xx'^{-1})z$ and $y'(yy'^{-1})z=yz$, a contradiction.

2. We refer to  \cite[Theorem 3.6]{F-G-K-T17}, and for 3. see \cite[Theorem 4.4.11]{Ge-HK06a}.
\end{proof}

These finiteness results for finitely generated monoids give rise to a core question in the area.

\smallskip
\noindent
\hypertarget{A}{{\bf Problem A.}} {\it Take a class $\mathcal C$ of distinguished objects (e.g.,  the class of Noetherian domains or the class
\phantom{{\bf Problem B.}} of Krull monoids). Provide an algebraic characterization of the  objects in $\mathcal C$ satisfying all \phantom{{\bf Problem B.}} resp. some of arithmetical finiteness properties of finitely generated monoids.}

\smallskip
\noindent
There are such algebraic characterizations of arithmetical finiteness properties in the literature (e.g.,  the finiteness of the elasticity is characterized within the class of finitely generated domains in \cite{Ka05a}; see also \cite{Ka16b}). But Problem A addresses a field of problems, many of which  are wide open.
In this survey, we show that  transfer Krull monoids of finite type satisfy the same arithmetical finiteness properties as given in Theorem \ref{3.1} (Theorem \ref{5.5}) and we characterize the finiteness of unions of sets of lengths in the setting of weakly Krull monoids (Theorems \ref{7.2} and \ref{7.4}). It is no surprise  that none of the statements of Theorem \ref{3.1} needs to hold true for general BF-monoids and Remark \ref{5.7} gathers some most striking examples.

\section{Krull monoids} \label{4}

\begin{definition} \label{4.1}
A monoid $H$ is a {\it Krull monoid} if it is cancellative  and satisfies one of the following equivalent conditions{\rm \,.}
\begin{enumerate}
\item[(a)] $H$ is a completely integrally closed Mori monoid.

\item[(b)] $H$ has a divisor theory $\partial \colon H \to \mathcal F (P)$; this means that $\partial$ is a divisor homomorphism such that for every $\alpha \in \mathcal F (P)$ there are $a_1, \ldots, a_m \in H$ with $\alpha = \gcd \bigl( \partial (a_1), \ldots, \partial (a_m) \bigr)$.

\item[(c)] $H$ has a divisor homomorphism into a free abelian monoid.
\end{enumerate}
\end{definition}

Let $H$ be a Krull monoid. Then the monoid $\mathcal I_v^* (H)$ is free abelian, and there is a free abelian monoid $F = \mathcal F (P)$ such that the inclusion $H_{\red} \hookrightarrow F$ is a divisor theory. Since divisor theories of a monoid are unique up to isomorphisms, the group
\[
\mathcal C (H) = \mathsf q (F)/\mathsf q (H_{\red})
\]
depends only on $H$ and it is called the {\it (divisor) class group} of $H$. Every $g \in \mathcal C (H)$ is a subset of $\mathsf q (F)$, $P \cap g$ is the set of prime divisors lying in $g$, and $G_0 = \{[p] = q \mathsf q (H_{\red}) \colon p \in P \} \subset \mathcal C (H)$ is the set of classes containing prime divisors.

\begin{example}[\bf Examples of Krull monoids] \label{4.2}~

1. Domains. A Noetherian domain is Krull if and only if it is integrally closed and the integral closure of any Noetherian domain is Krull (Theorem of Mori-Nagata). The property of being a Krull domain is a purely multiplicative one. Indeed, a   domain $R$ is a Krull domain if and only if its multiplicative monoid of nonzero elements is a Krull monoid (this characterization generalizes to rings with zero-divisors, see \cite[Theorem 3.5]{Ge-Ra-Re15c}).  If $R$ is a Krull domain, then $\mathcal C (R^{\bullet}) \cong \mathcal C_v (R)$, where $\mathcal C_v (R)$ is the usual $v$-class group of a Krull domain. If $R$ is a Dedekind domain, then $\mathcal C (R^{\bullet}) \cong \Pic (R)$.

2. Submonoids of domains. Since the composition of divisor homomorphisms is a divisor homomorphism, every saturated submonoid $H \subset R^{\bullet}$ of a Krull domain $R$ is a Krull monoid. But also non-Krull domains may have submonoids that are Krull. We mention two classes of examples.

Let $\mathcal O$ be an order in an algebraic number field $K$ with conductor $\mathfrak f = (\mathcal O \DP \widehat{\mathcal O})$. Then $\widehat{\mathcal O} = \mathcal O_K$ is the ring of integers of $K$ and $H = \{ a \in \mathcal O \colon a\mathcal O+ \mathfrak f = \mathcal O_K \} \subset \mathcal O^{\bullet}$ is a submonoid. Moreover, $H$ is Krull and it is an example of a regular congruence monoid (\cite[Section 2.11]{Ge-HK06a}).

Let $R$ be an integral domain with quotient field $K$. Then $\Int (R) = \{f \in K[X] \colon f (R) \subset R\} \subset K[X]$ is the ring of integer-valued polynomials. If $R$ is factorial, then the divisor-closed submonoid $\LK f \RK \subset \Int (R)$ is a Krull monoid for every nonzero polynomial $f \in \Int (R)$ (\cite{Re14a}).

3. Normal affine monoids. Let $H$ be a reduced monoid. Then $H$ is normal and affine if and only if $H$ is a finitely generated Krull monoid, which holds if and only if it is isomorphic to the monoid of non-negative solutions of a system of linear diophantine equations (\cite[Theorem 2.7.14]{Ge-HK06a}). Normal affine monoids and the associated monoid algebras play a crucial role  in combinatorial commutative algebra (\cite{Br-Gu09a}).

4. Monoids of modules. Let $R$ be a not necessarily commutative ring, $\mathcal C$ a class of $R$-modules, and $\mathcal V ( \mathcal C)$ the semigroup of modules as introduced in Example \ref{2.1}.2. By a path breaking result of Facchini (\cite[Theorem 3.4]{Fa02}), $\mathcal V ( \mathcal C)$ is a Krull monoid if the endomorphism rings $\End_R (M)$ are semilocal for all modules $M$ of $\mathcal C$ (for modules having semilocal endomorphism rings see \cite{Fa06b}). This result paved the way for studying direct-sum decomposition of modules with methods from the factorization theory of Krull monoids.

5. Monoids of zero-sum sequences. Let $G$ be an abelian group, $G_0 \subset G$ a subset, and $\mathcal F (G_0)$ the free abelian monoid with basis $G_0$. According to the tradition of additive combinatorics, elements of $\mathcal F (G_0)$ are called {\it sequences} over $G_0$. If $S = g_1 \cdot \ldots \cdot g_{\ell} \in \mathcal F (G_0)$, then $\sigma (S) = g_1 + \ldots + g_{\ell} \in G$ is the sum of $S$ and $S$ is called a {\it zero-sum sequence} if $\sigma (S)=0$.  The set $\mathcal B (G_0) = \{ S \in \mathcal F (G_0) \colon \sigma (S)=0 \} \subset \mathcal F (G_0)$ is a submonoid (called the {\it monoid of zero-sum sequences} over $G_0$) and since the inclusion $\mathcal B (G_0) \hookrightarrow \mathcal F (G_0)$ is a divisor homomorphism, $\mathcal B (G_0)$ is a Krull monoid. Suppose that  $G_0$ is finite. Then $\mathcal B (G_0)$ is finitely generated and the converse holds if $G = [G_0]$. Moreover, since $\mathcal B (G_0)$ is reduced and its quotient group is torsion-free, it is a normal affine monoid.

6. Analytic monoids. These are Krull monoids with finite class group and a suitable norm function that allows to establish a theory of $L$-functions. Analytic monoids serve as a general frame for a quantitative theory of factorizations. Let $\partial \colon H \to \mathcal F (P)$ be a divisor theory of $H$ and let $\mathsf N  \colon \mathcal F (P) \to \N$ be a norm. The goal of quantitative factorization theory is to study, for a given arithmetical property $\mathsf P$, the asymptotic behavior, for $x \to \infty$, of the associated counting function
\[
\mathsf P (x) =  \#\{ a \in H \colon \mathsf N (a) \le x, a \ \text{satisfies Property} \ \mathsf P \} \,.
\]
A systematic study of counting functions (in the setting of algebraic number fields) was initiated by Narkiewicz in the 1960s (we refer to the presentations in the monographs \cite[Chapter 9]{Na04}, \cite[Chapter 9]{Ge-HK06a}), and for recent work  to \cite{Ka17a}).
Among others, the property that  ''$\max \mathsf L (a) \le k$" was studied for every $k \in \N$. Note that $\max \mathsf L (a) = 1$ if and only if $a$ is irreducible whence $\mathsf P_1 (x)$ counts the number of irreducibles with norm $\mathsf N (a) \le x$.
The property that ''$\mathsf L (a)$ is an interval" deserves special attention. It turned out that almost all sets of lengths are intervals. More precisely, for the ring of integers $\mathcal O_K$ in an algebraic number field $K$  we have (\cite[Theorem 9.4.11]{Ge-HK06a})
\begin{equation} \label{density}
\lim_{x \to \infty} \frac{ \#\{ a\mathcal O_K  \colon \mathsf N (a) \le x,  \ \text{$\mathsf L (a)$ is an interval}  \} }{ \#\{ a\mathcal O_K  \colon \mathsf N (a) \le x \} }  = 1 \,.
\end{equation}
This result is in contrast to Theorem \ref{5.6}.3 demonstrating the  variety of sets of lengths in Krull monoids with class group $G$ and should also be compared with \hyperlink{C}{{Problem C.}} in Section \ref{6}.

\end{example}

Let $H$ be a Krull monoid, $H_{\red} \hookrightarrow F = \mathcal F (P)$ a divisor theory, $G$ an abelian group, and $(m_g)_{g \in G}$ a family of cardinal numbers. We say that $H$ has {\it characteristic} $(G, (m_g)_{g \in G})$ if there is a group isomorphism $\phi \colon G \to \mathcal C (H)$ such that $\card (P \cap \phi (g) ) = m_g$ for all $g \in G$.

\begin{theorem}[\bf Structure and Realization Results for Krull monoids] \label{4.3}~

\begin{enumerate}
\item If $G$ is an abelian group, $(m_g)_{g \in G}$ a family of cardinal numbers, $G_0 = \{ g \in G \colon m_g \ne 0 \}$, and $G_1 = \{g \in G \colon m_g=1 \}$, then the following statements are equivalent{\rm \,.}
    \begin{enumerate}
    \item There exists a Krull monoid with characteristic $(G, (m_g)_{g \in G})$.

    \item $G = [G_0]$ and $G = [G_0 \setminus \{g\}]$ for every $g \in G_1$.
    \end{enumerate}
    Moreover, two reduced Krull monoids are isomorphic if and only if they have the same characteristic.

\item For every Krull monoid $H$ there is a reduced Krull monoid $H_0$ with $H_0 \cong H_{\red}$ such that $H = H^{\times} \time H_0$.

\item For every reduced Krull monoid $H$ there is an abelian group $G$ and a subset $G_0 \subset G$ such that $H \cong \mathcal B (G_0)$.

\item For every reduced Krull monoid $H$ there is a ring $R$ and a class of $R$-modules $\mathcal C$ such that $H \cong \mathcal V (\mathcal C)$.
\end{enumerate}
\end{theorem}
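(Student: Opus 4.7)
My plan is to address the four parts separately, with (1) and (2) reducing to direct constructions, (3) carrying the substantive technical content, and (4) following from (3) combined with earlier cited realization results.

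For part (1), given a characteristic $(G,(m_g)_{g\in G})$ satisfying (b), I would construct an explicit Krull monoid realizing it: choose a set $P=\bigsqcup_{g\in G} P_g$ with $|P_g|=m_g$, let $F=\mathcal F(P)$, define $\sigma\colon F\to G$ by $\sigma|_{P_g}\equiv g$, and set $H=\sigma^{-1}(0)$. Then $H$ is a saturated submonoid of the free abelian monoid $F$ and hence Krull. The condition $G=[G_0]$ forces $\sigma$ to be surjective, giving $\mathcal C(H)\cong G$; the condition $G=[G_0\setminus\{g\}]$ for each $g\in G_1$ is exactly what is needed to guarantee that each $p\in P$ is a $\gcd$ of elements of $H$, so that $H\hookrightarrow F$ is a divisor theory. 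Necessity of (b) is the reverse direction, extracted directly from the divisor theory axioms. Uniqueness follows because any isomorphism of characteristics lifts to a label-preserving bijection of prime sets and hence to an isomorphism of the divisor theories, restricting to an isomorphism of the reduced Krull monoids.

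For part (2), the divisor homomorphism $H\to F$ to a free abelian monoid induces an injection $\mathsf q(H)/H^\times\hookrightarrow \mathsf q(F)$ of quotient groups. Since subgroups of free abelian groups are free abelian, $\mathsf q(H)/H^\times$ is free abelian, and the short exact sequence
\[
1\longrightarrow H^\times\longrightarrow \mathsf q(H)\longrightarrow \mathsf q(H)/H^\times\longrightarrow 1
\]
splits. A chosen section yields a subgroup $Q\subset\mathsf q(H)$ with $\mathsf q(H)=H^\times\oplus Q$; setting $H_0=H\cap Q$, every element of $H$ factors uniquely as $u h_0$ with $u\in H^\times$ and $h_0\in H_0$, and the quotient map restricts to an isomorphism $H_0\cong H_{\red}$.

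For part (3), the starting point is the divisor theory $H\hookrightarrow F=\mathcal F(P)$ and the observation that $H=\pi^{-1}(0)\cap F$ under the class map $\pi\colon\mathsf q(F)\to C=\mathcal C(H)$. This already exhibits $H$ as the ``block monoid'' of the family $(\pi(p))_{p\in P}$ in $C$; the subtlety is that this family may contain repetitions, so it is not literally a subset of $C$. To get an honest subset $G_0$ of some abelian group $G$, I would enlarge $C$ by auxiliary cyclic summands so that distinct primes in the same class receive distinct images while the zero-sum structure still encodes membership in $H$ faithfully. The matching of atoms of $H$ with minimal zero-sum sequences over $G_0$ exploits the fact that an element of $G_0$ of finite order $n$ produces an atom of length $n$ in $\mathcal B(G_0)$, so one has sufficient flexibility to rescale primes and distinguish them. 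The main obstacle is arranging the auxiliary orders and labels coherently so that the resulting $\mathcal B(G_0)$ is isomorphic to $H$ as a monoid, neither introducing spurious atoms nor missing any of the original ones.

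For part (4), I would combine (3) with the realization results recalled in Examples \ref{2.1} and \ref{4.2}. The Bergman--Dicks construction exhibits any reduced commutative semigroup of the form $\LK a\RK$ as the module semigroup over a hereditary $k$-algebra, and Facchini's theorem anchors Krull module monoids in classes of modules with semilocal endomorphism rings. Applying these to the pair $(G,G_0)$ produced by part (3) yields a ring $R$ and a class $\mathcal C$ of $R$-modules with $\mathcal V(\mathcal C)\cong\mathcal B(G_0)\cong H$.
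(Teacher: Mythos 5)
Your treatments of parts (1) and (2) are correct and coincide with the standard arguments (note that the paper itself offers no proof here, only citations to \cite{Ge-HK06a} for 1.--3.\ and to \cite{Fa-Wi04} for 4.). The real issue is part (3), where you explicitly leave ``the main obstacle'' unresolved and where the route you sketch cannot work as described. Suppose $H\hookrightarrow F=\mathcal F(P)$ is the divisor theory, $C=\mathsf q(F)/\mathsf q(H)$, and you take $G=C\oplus G'$ with $\phi(p)=([p],\psi(p))$, asking that $a=\prod_{p}p^{k_p}\in F$ lie in $H$ exactly when $\sum k_p\phi(p)=0$. Membership in $H$ is equivalent to $\sum k_p[p]=0$ by saturation, so you are forced to require $\sum k_p\psi(p)=0$ whenever $\sum k_p[p]=0$ with all $k_p\ge 0$. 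Since $\mathsf q(H)$ is generated by $H$, the homomorphism $\Z^{(P)}\to G'$, $(k_p)\mapsto\sum k_p\psi(p)$, then vanishes on all of $\mathsf q(H)$ and factors through $C$; hence $\psi(p)$ depends only on $[p]$ and cannot separate two primes in the same class. Thus ``distinct images while the zero-sum structure still encodes membership in $H$ faithfully'' is self-contradictory whenever some class contains more than one prime divisor. Any correct construction must abandon the prime-by-prime relabelling: for instance $\mathcal F(\{p_1,p_2\})\cong\mathcal B(\{g_1,g_2\})$ with $G=\Z/2\Z\oplus\Z/3\Z$, $g_1=(1,0)$, $g_2=(0,1)$, where $p_i$ corresponds to the atom $g_i^{\ord(g_i)}$ and not to $g_i$. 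Your remark about elements of finite order producing atoms of length $n$ points in this direction, but choosing the orders and the ambient group so that the atoms of $\mathcal B(G_0)$ match those of $H$ without spurious relations is precisely the content of the theorem, and you have not supplied it.

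Part (4) also does not follow from the two facts you invoke. Bergman--Dicks applies only to reduced semigroups of the form $\LK a\RK$ for a single element $a$; a reduced Krull monoid such as $\mathcal B(G_0)$ with $G_0$ infinite is not of this form, since no single zero-sum sequence has all of $G_0$ in its support. And the theorem of Facchini recalled in Example \ref{4.2}.4 goes in the wrong direction: it says that semilocal endomorphism rings force $\mathcal V(\mathcal C)$ to be Krull, not that every reduced Krull monoid arises as some $\mathcal V(\mathcal C)$. What is actually needed is the realization theorem of Facchini and Wiegand \cite[Theorem 2.1]{Fa-Wi04}, which is a genuine construction and not a corollary of (3) together with the results you cite.
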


\begin{proof}
For 1. - 3. see \cite[Sections 2.4 and  2.5]{Ge-HK06a} and for 4. see \cite[Theorem 2.1]{Fa-Wi04}.
\end{proof}

Next we introduce transfer homomorphisms, a key tool in factorization theory (for transfer homomorphisms in more general settings see \cite{Ba-Sm15, Fa-Tr18a}).

\begin{definition} \label{4.4}
A monoid homomorphism $\theta \colon H \to B$ between atomic monoids is said to be a {\it transfer homomorphism} if the following two properties are satisfied.

\begin{enumerate}
\item[{\bf (T\,1)\,}] $B = \theta(H) B^\times$ \ and \ $\theta
^{-1} (B^\times) = H^\times$.

\item[{\bf (T\,2)\,}] If $u \in H$, \ $b,\,c \in B$ \ and \ $\theta
(u) = bc$, then there exist \ $v,\,w \in H$ \ such that \ $u = vw$, \
$\theta (v) \simeq b$ \ and \ $\theta (w) \simeq c$.
\end{enumerate}
\end{definition}

Thus transfer homomorphisms are surjective up to units and they allow to lift factorizations. The next proposition shows that they allow one to pull back arithmetical information to the source monoid.

\begin{proposition}  \label{4.5}
Let $\theta \colon H \to B$ be a transfer homomorphism between atomic monoids.
\begin{enumerate}
\item For every $a \in H$ we have $\mathsf L_H (a) = \mathsf L_{B} \big( \theta (a) \big)$. In particular, we have $\mathcal L (H) = \mathcal L (B)$, $\Delta (H) = \Delta (B)$, $\mathcal R (H) = \mathcal R (B)$, and $\mathcal U_k (H) = \mathcal U_k (B)$ for all $k \in \N$.

\item $\mathsf c (B) \le \mathsf c (H) \le \max \{\mathsf c (B), \mathsf c (H, \theta)\}$, where $\mathsf c (H, \theta)$ is the catenary degree in the fibres.
\end{enumerate}
\end{proposition}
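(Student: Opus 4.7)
For part 1, my plan is to first verify that $\theta$, and the induced map $\overline\theta \colon \mathsf Z(H) \to \mathsf Z(B)$ on factorization monoids, carry atoms to atoms and admit term-wise lifts. Indeed, if $u \in \mathcal A(H_{\red})$, then $\theta(u) \notin B^\times$ by (T1), and any factorization $\theta(u) = bc$ in $B$ lifts via (T2) to $u = vw$ with $\theta(v) \simeq b$ and $\theta(w) \simeq c$; atomicity of $u$ together with (T1) forces one of $b, c$ into $B^\times$, so $\theta(u)$ is an atom. In the reverse direction, iterating (T2) on a factorization $\theta(a) = b_1 \cdots b_\ell$ yields $a = v_1 \cdots v_\ell$ with $\theta(v_i) \simeq b_i$, and each $v_i$ is then an atom by the same argument. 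This gives $\mathsf L_H(a) = \mathsf L_B(\theta(a))$. The equalities of $\mathcal L$, $\Delta$, $\mathcal R$, and $\mathcal U_k$ follow once one notes via (T1) that every $b \in B$ is associated to some $\theta(a)$, so the two systems of sets of lengths coincide and the derived invariants coincide formally.

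For the first inequality of part 2, $\mathsf c(B) \le \mathsf c(H)$, the essential observation is that $\overline\theta$ does not increase the distance $\mathsf d$: if $z_1, z_2 \in \mathsf Z(H)$ share a common sub-factorization of length $\ell$, then $\overline\theta(z_1)$ and $\overline\theta(z_2)$ share at least its image, so $\mathsf d(\overline\theta(z_1), \overline\theta(z_2)) \le \mathsf d(z_1, z_2)$. Given $y, y' \in \mathsf Z_B(b)$, pick $a \in H$ with $\theta(a) \simeq b$ by (T1), lift $y$ and $y'$ to $z, z' \in \mathsf Z_H(a)$, join them by a $\mathsf c(H)$-chain in $\mathsf Z_H(a)$, and project via $\overline\theta$ to obtain a $\mathsf c(H)$-chain from $y$ to $y'$ in $\mathsf Z_B(b)$.

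The inequality $\mathsf c(H) \le N := \max\{\mathsf c(B), \mathsf c(H,\theta)\}$ is the core step. Given $z, z' \in \mathsf Z_H(a)$, first chain the images $\overline\theta(z) = y_0, y_1, \ldots, y_r = \overline\theta(z')$ by a $\mathsf c(B)$-chain in $\mathsf Z_B(\theta(a))$. Then inductively construct $z_0^* = z, z_1^*, \ldots, z_r^* \in \mathsf Z_H(a)$ with $\overline\theta(z_i^*) = y_i$ and $\mathsf d(z_{i-1}^*, z_i^*) \le \mathsf d(y_{i-1}, y_i) \le \mathsf c(B)$. Writing $y_{i-1} = c \cdot y_{i-1}'$ and $y_i = c \cdot y_i'$ to realize their distance, one splits $z_{i-1}^* = \tilde c \cdot \tilde z_{i-1}'$ by selecting those atoms of $z_{i-1}^*$ whose $\overline\theta$-images make up $c$, and then applies (T2) iteratively on the element $a$ to produce a factorization $z_i^* = \tilde c \cdot \tilde z_i'$ in which the $\tilde c$-part is retained and $\tilde z_i'$ is a term-wise lift of $y_i'$. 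Then $\overline\theta(z_i^*) = y_i$, and the two factorizations differ only on the tails, so $\mathsf d(z_{i-1}^*, z_i^*) \le \max\{|y_{i-1}'|, |y_i'|\} = \mathsf d(y_{i-1}, y_i)$. Finally $z_r^*$ and $z'$ lie over the common image $y_r$, hence in a common $\overline\theta$-fibre, and are connected by a $\mathsf c(H,\theta)$-chain. The main obstacle is precisely this relative lifting step: lifting each $y_i$ independently via (T2) would destroy closeness to the previous $z_{i-1}^*$, so one must first split off at the $H$-level a lift of the common part $c$ and only invoke (T2) on the differing tails, keeping track of units (which is automatic once one works inside $H_{\red}$ and $B_{\red}$) and using the unit-cancellative structure to ensure that the resulting $\tilde z_i'$ combines with $\tilde c$ back to a factorization of $a$.
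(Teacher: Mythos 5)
Your Part 1 and the inequality $\mathsf c (B) \le \mathsf c (H)$ are correct, and your overall architecture for the upper bound is the standard one that the paper's citations point to. However, the relative lifting step --- which you yourself identify as the crux --- does not work as you describe it. You claim to apply \textbf{(T2)} ``iteratively on the element $a$'' so as to produce $z_i^* = \tilde c \cdot \tilde z_i'$ ``in which the $\tilde c$-part is retained''. Axiom \textbf{(T2)} only yields \emph{some} splitting $a = vw$ with $\theta (v) \simeq \pi_B (c)$ and $\theta (w) \simeq \pi_B(y_i')$; it gives no control over which divisor of $a$ plays the role of $v$, so you cannot force $v$ to be associated to $\pi (\tilde c)$, and without that the distance estimate $\mathsf d (z_{i-1}^*, z_i^*) \le \mathsf d (y_{i-1}, y_i)$ collapses. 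The correct move is to leave $\tilde c$ untouched and apply \textbf{(T2)} iteratively to the complementary element $a' := \pi (\tilde z_{i-1}')$, refactoring it along the atoms of $y_i'$. But this requires knowing beforehand that $\theta (a') \simeq \pi_B (y_i')$, which one gets by cancelling $\pi_B(c)$ from $\pi_B (c)\, \pi_B (y_{i-1}') \simeq \theta (a) \simeq \pi_B (c) \,\pi_B (y_i')$ --- a step that needs $B$ to be \emph{cancellative}. In the paper's convention a monoid is only unit-cancellative, so this identity is not automatic, and your closing appeal to ``the unit-cancellative structure'' does not supply it.

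In short: your argument is complete whenever $B$ is cancellative (which covers every use of the proposition in this paper, since the target of a transfer homomorphism is always taken to be a Krull monoid, e.g.\ $\mathcal B (G_0)$), and in that case it coincides with the proof of \cite[Theorem 3.2.5]{Ge-HK06a} that the paper cites. For the statement as literally given --- $\theta \colon H \to B$ between arbitrary atomic, hence merely unit-cancellative, monoids --- the relative lifting step is a genuine gap; this is precisely the point where the extension to the unit-cancellative setting (as in \cite{F-G-K-T17}) requires an argument beyond the cancellative one, and you should either add the hypothesis that $B$ is cancellative or supply that extra argument.
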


\begin{proof}
1. It follows easily from the definition that $\mathsf L_H (a) = \mathsf L_{B} \big( \theta (a) \big)$ for every $a \in H$ (for a proof in the cancellative setting see \cite[Proposition 3.2.3]{Ge-HK06a}). The remaining statements are an immediate consequence.

2. The proof runs along the same lines as in the cancellative setting (\cite[Theorem 3.2.5]{Ge-HK06a}).
\end{proof}

\begin{proposition} \label{4.6}
Let $H$ be a reduced Krull monoid, $F = \mathcal F (P)$ be a free abelian monoid such that the inclusion $H \hookrightarrow F$ is a cofinal divisor homomorphism. Let $G = \mathsf q (F)/\mathsf q (H)$ be the class group, $G_0 = \{[p] = p \mathsf q (H) \colon p \in P\} \subset G$ denote the set of classes containing prime divisors, and let $\widetilde{\boldsymbol \beta} \colon F \to \mathcal F (G_0)$ be the unique homomorphism such that $\widetilde{\boldsymbol \beta} (p) = [p]$ for all $p \in P$.
\begin{enumerate}
\item For every $a \in F$, we have $a \in H$ if and only if $\widetilde{\boldsymbol \beta} (a) \in \mathcal B (G_0)$.

\item The restriction $\boldsymbol \beta = \widetilde{\boldsymbol \beta}|H \colon H \to \mathcal B (G_0)$ is a transfer homomorphism with $\mathsf c (H, \boldsymbol \beta) \le 2$.
\end{enumerate}
\end{proposition}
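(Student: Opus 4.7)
For Part (1), I would exploit that $H \hookrightarrow F$ being a divisor homomorphism makes $H$ saturated in $F$, so $H = F \cap \mathsf q(H)$ inside $\mathsf q(F)$. The quotient map $\pi \colon F \to G = \mathsf q(F)/\mathsf q(H)$ sends $a = \prod_{p \in P} p^{\mathsf v_p(a)} \in F$ to $\sum_{p \in P} \mathsf v_p(a)[p] = \sigma(\widetilde{\boldsymbol\beta}(a))$. Hence for $a \in F$, the three conditions $a \in H$, $\pi(a)=0$, and $\widetilde{\boldsymbol\beta}(a) \in \mathcal B(G_0)$ are equivalent.

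Part (1) is the engine for both transfer axioms. Both $H$ and $\mathcal B(G_0) \subset \mathcal F(G_0)$ are reduced, so (T1) reduces to surjectivity of $\boldsymbol\beta$: given $S = g_1 \cdots g_\ell \in \mathcal B(G_0)$, lift each $g_i$ to a prime $p_i \in P \cap g_i$ (possible since $g_i \in G_0$); then $a = p_1 \cdots p_\ell \in F$ satisfies $\widetilde{\boldsymbol\beta}(a) = S \in \mathcal B(G_0)$, so $a \in H$ by Part (1) with $\boldsymbol\beta(a) = S$. For (T2), given $u \in H$ with $\boldsymbol\beta(u) = BC$ for $B, C \in \mathcal B(G_0)$, write $u = p_1 \cdots p_n$ as primes in $F$; the equation $[p_1] \cdots [p_n] = BC$ in the free monoid $\mathcal F(G_0)$ partitions $\{1, \ldots, n\} = I_B \sqcup I_C$ with $\prod_{i \in I_B} [p_i] = B$ and $\prod_{i \in I_C} [p_i] = C$. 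Setting $v = \prod_{i \in I_B} p_i$ and $w = \prod_{i \in I_C} p_i$ gives $u = vw$ in $F$ with $\widetilde{\boldsymbol\beta}(v), \widetilde{\boldsymbol\beta}(w) \in \mathcal B(G_0)$, so Part (1) places both $v, w$ in $H$.

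For the bound $\mathsf c(H, \boldsymbol\beta) \le 2$, given $a \in H$ and $x, y \in \mathsf Z_H(a)$ with $\overline{\boldsymbol\beta}(x) = \overline{\boldsymbol\beta}(y)$, I would connect them by a $2$-chain of factorizations all lying in the same $\overline{\boldsymbol\beta}$-fibre. The elementary move is a \emph{prime swap} between two atoms $u_i, u_j$ appearing in a factorization: picking primes $r$ in $u_i$ and $q$ in $u_j$ with $r \ne q$ and $[r]=[q]$, the elements $u_i' = (u_i/r)q$ and $u_j' = (u_j/q)r$ have the same $\widetilde{\boldsymbol\beta}$-images as $u_i, u_j$, hence lie in $H$ by Part (1) and remain atoms (since the now-established (T1), (T2) force $\boldsymbol\beta$ to reflect atoms); replacing $u_i, u_j$ by $u_i', u_j'$ yields $x' \in \mathsf Z_H(a)$ with $\overline{\boldsymbol\beta}(x') = \overline{\boldsymbol\beta}(x)$ and $\mathsf d(x, x') \le 2$. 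The main obstacle is arranging the swaps so as to converge to $y$. After cancelling common atoms and reindexing so that $\boldsymbol\beta(u_i) = \boldsymbol\beta(v_i)$ for each $i$, I would induct on $M(x, y) = \sum_i \sum_{p \in P} |\mathsf v_p(u_i) - \mathsf v_p(v_i)|$: if $M > 0$, pick $r, i$ with $\mathsf v_r(u_i) > \mathsf v_r(v_i)$; the column-sum identity $\sum_{i'} \mathsf v_r(u_{i'}) = \mathsf v_r(a) = \sum_{i'} \mathsf v_r(v_{i'})$ yields $j \ne i$ with $\mathsf v_r(u_j) < \mathsf v_r(v_j)$; the row-sum identity in class $[r]$ at atom $j$, enforced by $\boldsymbol\beta(u_j) = \boldsymbol\beta(v_j)$, yields a prime $q \ne r$ with $[q]=[r]$ and $\mathsf v_q(u_j) > \mathsf v_q(v_j)$. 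A case-check on the four altered multiplicities then shows that the swap $(i, j, r, q)$ decreases $M$ by at least $2$ (three contributions strictly decrease, while the fourth varies by $\pm 1$), so iterating reaches $M=0$, at which stage $x=y$.
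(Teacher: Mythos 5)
Your proof of Part (1) and of the transfer axioms {\bf (T1)}, {\bf (T2)} is correct and coincides with the paper's argument: saturation gives $H = \mathsf q (H) \cap F$, the class of $a$ equals $\sigma \bigl( \widetilde{\boldsymbol \beta} (a) \bigr)$, and factorizations of $\boldsymbol \beta (u)$ in $\mathcal B (G_0) \subset \mathcal F (G_0)$ are lifted by partitioning the prime divisors of $u$. The one place where you genuinely diverge is the bound $\mathsf c (H, \boldsymbol \beta) \le 2$: the paper outsources this to \cite[Proposition 3.4.8]{Ge-HK06a}, whereas you supply a self-contained prime-swap argument, and that argument is sound. A swap of primes $r, q$ with $r \ne q$ and $[r]=[q]$ between two atoms preserves membership in $H$ (by Part (1)), preserves atomicity (since {\bf (T1)}--{\bf (T2)} force $\boldsymbol \beta$ to reflect atoms), keeps the factorization in the same fibre and over the same element of $H$ (the product of the two altered atoms is unchanged in $F$), and moves a distance at most $2$. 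Your discrepancy count is also right: after reindexing so that $\boldsymbol \beta (u_i) = \boldsymbol \beta (v_i)$, the column identity $\sum_{i'} \mathsf v_r (u_{i'}) = \mathsf v_r (a) = \sum_{i'} \mathsf v_r (v_{i'})$ and the row identity within the class $[r]$ produce a legal swap in which the three terms of $M(x,y)$ indexed by $(i,r)$, $(j,q)$, $(j,r)$ each drop by $1$ while $(i,q)$ changes by $\pm 1$, so $M$ drops by at least $2$ and the induction terminates at $x = y$. In substance this is the standard proof of the cited proposition, so the net effect of your write-up is the same route with the one black box opened.
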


\begin{proof}
1. Let $a = p_1 \cdot \ldots \cdot p_{\ell} \in F$, where $\ell \in \N_0$ and $p_1, \ldots, p_{\ell} \in P$. Since the inclusion $H \hookrightarrow F$ is a divisor homomorphism, we have $H = \mathsf q (H) \cap F$, whence $a \in H$ if and only if $0 = [a] = [p_1] + \ldots + [p_{\ell}] = \sigma ( \widetilde{\boldsymbol \beta} (a) )$.

2. By 1., we have $\boldsymbol \beta (H) = \mathcal B (G_0)$ whence $\boldsymbol \beta \colon H \to \mathcal B (G_0)$ is surjective and $\boldsymbol \beta^{-1} (1) = \{1\}$. To verify {\bf (T2)}, let $a = p_1 \cdot \ldots \cdot p_{\ell} \in H$ be given with $\ell \in \N_0$ and $p_1, \ldots, p_{\ell} \in P$. Suppose that $\boldsymbol \beta (a) = BC$ with $B, C \in \mathcal B (G_0)$, say $B = [p_1] \cdot \ldots \cdot [p_k]$ and $C = [p_{k+1}] \cdot \ldots \cdot [p_{\ell}]$ with $k \in [0, \ell]$. Then 1. implies that $b = p_1 \cdot \ldots \cdot p_k \in H$, $c = p_{k+1} \cdot \ldots \cdot p_{\ell} \in H$, and clearly $a=bc$. Thus $\boldsymbol \beta$ is a transfer homomorphism. The inequality $\mathsf c (H, \boldsymbol \beta) \le 2$ follows from \cite[Proposition 3.4.8]{Ge-HK06a}.
\end{proof}

\section{Transfer Krull monoids} \label{5}

Within the class of Mori monoids, Krull monoids are the ones whose arithmetic is best understood.
Transfer Krull monoids need not be Krull but they have the same arithmetic as Krull monoids.
They include all commutative Krull monoids, but also classes of not integrally closed Noetherian domains and of non-commutative Dedekind domains (see Example \ref{5.4}). We start with the definition, discuss some basic properties, and as a main structural result we show that for every cancellative transfer Krull monoid there is an overmonoid that is Krull such that the inclusion is a transfer homomorphism (Proposition \ref{5.3}.2).

\begin{definition} \label{5.1}
A monoid $H$ is said to be a {\it transfer Krull monoid} if one of the following two equivalent properties is satisfied{\rm \,.}
\begin{enumerate}
\item[(a)] There exist a Krull monoid $B$ and a transfer homomorphism $\theta \colon H \to B$.

\item[(b)] There exist an abelian group $G$, a subset $G_0 \subset G$, and a transfer homomorphism $\theta \colon H \to \mathcal B (G_0)$.
\end{enumerate}
\end{definition}

Since $\mathcal B (G_0)$ is a Krull monoid by Example \ref{4.2}.5, Property (b) implies Property (a). Conversely, since every Krull monoid has a transfer homomorphism to a monoid of zero-sum sequences by Proposition \ref{4.6} and since the composition of transfer homomorphisms is a transfer homomorphism, Property (a) implies Property (b). Thus Property (a) and Property (b) are equivalent. If $H$ satisfies Property (b) with a finite set $G_0$, then $H$ is said to be {\it transfer Krull of finite type}. Since Krull monoids are BF-monoids, transfer Krull monoids are BF-monoids by Proposition \ref{4.5}.1, but they need neither be Mori nor be completely integrally closed.

\begin{lemma} \label{5.2}~

\begin{enumerate}
\item Coproducts of transfer Krull monoids are transfer Krull.

\item Divisor-closed submonoids of transfer Krull monoids are transfer Krull.

\item Let $H$ be a cancellative monoid, $\theta \colon H \to B$ be a transfer homomorphism to a reduced Krull monoid $B$, $T \subset B$ be a submonoid, and $S = \theta^{-1} (T)$. Then  $\Theta = \mathsf q (\theta)| S^{-1} H \colon S^{-1}H \to T^{-1}B$ is a transfer homomorphism and $S^{-1}H$ is  transfer Krull.
\end{enumerate}
\end{lemma}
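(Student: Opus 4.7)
The plan is to exhibit, in each part, an explicit Krull target monoid and an explicit map, and verify (T1), (T2) of Definition \ref{4.4}. I will use throughout that coproducts, divisor-closed submonoids, and localizations of Krull monoids are again Krull; each follows from the divisor-homomorphism characterization in Definition \ref{4.1}(c), by taking the disjoint union of bases, by composing the saturated inclusion with the given divisor theory, and by passing to $T^{-1}\mathcal F(P)$ (a free abelian monoid times a free abelian group), respectively.

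For part 1, given $\theta_\alpha \colon H_\alpha \to B_\alpha$ with each $B_\alpha$ Krull, the coproduct map $\coprod_\alpha \theta_\alpha$ inherits (T1) and (T2) componentwise from the $\theta_\alpha$, using that the unit group of a coproduct of monoids is the coproduct of the unit groups. For part 2, let $S \subset H$ be divisor-closed and set $T = \LK \theta(S) \RK \subset B$; then $T$ is divisor-closed, hence Krull, and the restriction $\theta|_S \colon S \to T$ is the candidate transfer homomorphism. For (T1), each $t \in T$ divides some $\theta(s)$ with $s \in S$, so writing $\theta(s) = tt'$ and applying (T2) for $\theta$ yields $s = vw$ with $\theta(v) \simeq t$ and $v \in S$ by divisor-closedness. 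Property (T2) for $\theta|_S$ is inherited directly, since every lift of a factorization of $\theta(u)$ with $u \in S$ consists of divisors of $u$, hence of elements of $S$.

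For part 3, $T^{-1}B$ is Krull and $\Theta$ is a well-defined monoid homomorphism since $\theta(S) \subset T$. Because $B$ is reduced, (T1) for $\theta$ gives $\theta(H) = B$ and $\theta^{-1}(\{1\}) = H^\times$; hence every $b/t \in T^{-1}B$ equals $\Theta(h/s)$ for any lifts $h \in \theta^{-1}(b)$ and $s \in \theta^{-1}(t) \subset S$. The identification $\Theta^{-1}((T^{-1}B)^\times) = (S^{-1}H)^\times$ reduces, by cancellativity, to the equivalence: $\theta(h) c \in T$ for some $c \in B$ if and only if $h h' \in S$ for some $h' \in H$. Lifting $c = \theta(h')$ (using $B = \theta(H)$) gives the forward direction, and applying $\theta$ gives the backward direction.

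The main obstacle is (T2) for $\Theta$. Given $\Theta(h/s) = (c_1/t_1)(c_2/t_2)$ with $c_i \in B$, $t_i \in T$, cancellativity of $B$ rewrites this as $\theta(h) t_1 t_2 = c_1 c_2 \theta(s)$ in $B$. Choosing $s_i \in S$ with $\theta(s_i) = t_i$, I would apply (T2) for $\theta$ twice to the equation $\theta(h s_1 s_2) = \theta(s) \cdot c_1 \cdot c_2$, producing $h s_1 s_2 = v w_1 w_2$ in $H$ with $\theta(v) = \theta(s)$ and $\theta(w_i) = c_i$ (reducedness of $B$ promotes $\simeq$ to $=$). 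Then $h/s = \bigl(v w_1 / (s s_1) \bigr) \bigl( w_2/s_2 \bigr)$ is the required factorization in $S^{-1}H$ lifting the given one. Atomicity of $S^{-1}H$, required by Definition \ref{4.4}, follows from atomicity of $T^{-1}B$ by iterating (T2) to lift atomic factorizations, with (T1) ensuring the lifted factors are non-units.
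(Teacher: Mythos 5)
Your proposal is correct and follows essentially the same route as the paper's proof: componentwise verification for coproducts, restriction to the divisor-closed image $\theta(S)$ (which coincides with your $\LK \theta(S)\RK$), and for the localization the same clearing of denominators $\theta(hs_1s_2)=\theta(s)c_1c_2$ followed by lifting via {\bf (T2)} — the paper merely groups the factors as $b_1\cdot\bigl(b_2\theta(s)\bigr)$ and applies {\bf (T2)} once where you apply it twice. Your explicit verification of $\Theta^{-1}\bigl((T^{-1}B)^{\times}\bigr)=(S^{-1}H)^{\times}$ and of atomicity of $S^{-1}H$ fills in steps the paper leaves as ``an elementary calculation.''
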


\begin{proof}
1. Let $(H_i)_{i \in I}$ be a family of transfer Krull monoids and
\[
H = \coprod_{i \in I} H_i = \Big\{ (a_i)_{i \in I} \in \prod_{i \in I} H_i \colon a_i = 1 \ \text{for almost all} \ i \in I \Big\}
\]
their coproduct. If $(\theta_i \colon H_i \to B_i)_{i \in I}$ is a family of transfer homomorphisms into the Krull monoids $B_i$, then the homomorphism $\theta = (\theta_i)_{i \in I} \colon H \to \coprod_{i \in I} B_i$ is a transfer homomorphism. Since the coproduct of Krull monoids is a Krull monoid, $H$ is a transfer Krull monoid.

2. Let $\theta \colon H \to B$ be a transfer homomorphism to a reduced Krull monoid $B$ and let $S \subset H$ be a divisor-closed submonoid. Then the restriction $\theta | S \colon S \to \theta (S)$ is a transfer homomorphism, $\theta (S) \subset B$ is a divisor-closed submonoid, and since divisor-closed submonoids of Krull monoids are Krull, the divisor-closed submonoid $S \subset H$ is a transfer Krull monoid.

3. Since localizations of Krull monoids are Krull, $T^{-1}B$ is a Krull monoid and hence it suffices to verify that $\Theta \colon S^{-1}H \to T^{-1}B$ is a transfer homomorphism. Since $\theta$ is surjective, we infer that $\Theta$ is surjective. An elementary calculation shows that $\Theta^{-1} \bigl( (T^{-1}B)^{\times} \bigr) = (S^{-1}H)^{\times}$. Thus {\bf (T1)} holds. In order to verify {\bf (T2)}, let $u = \frac{h}{s} \in S^{-1}H$, $b = \frac{b_1}{t_1}, c = \frac{b_2}{t_2} \in T^{-1}B$ be such that $\Theta (u) = bc$, where $h \in H$, $s \in S$, $b_1, b_2 \in B$, and $t_1,t_2 \in T$. Let $s_1,s_2 \in S$ be such that $\theta (s_1)=t_1$ and $\theta (s_2)=t_2$. Then
\[
\frac{\theta (h)}{\theta (s)} = \frac{b_1}{\theta(s_1)}\frac{b_2}{\theta (s_2)} \quad \text{whence} \quad \theta (hs_1s_2) = b_1 \bigl( b_2 \theta (s) \bigr) \,.
\]
Since $\theta \colon H \to B$ is a transfer homomorphism, there are $x,y \in H$ such that $\theta (x)=b_1$, $\theta (y) = b_2 \theta (s)$, and $hs_1s_2=xy$. Thus we obtain that
\[
u = \frac{h}{s} = \frac{x}{s_1}\frac{y}{s_2s}, \ \Theta \Bigl( \frac{x}{s_1} \Bigr) = \frac{b_1}{t_1}, \quad \text{and} \quad \Theta \Bigl( \frac{y}{s_2s} \Bigr) = \frac{b_2}{t_2} \,,
\]
and hence {\bf (T2)} holds.
\end{proof}

\begin{proposition} \label{5.3}
Let $H$ be a monoid and  $H_{\canc}$ be the associated cancellative monoid.
\begin{enumerate}
\item $H$ is a transfer Krull monoid if and only if there is a Krull monoid $D$ with $H_{\canc} \subset D \subset \mathsf q (H_{\canc})$ such that the canonical map $\Theta \colon H \twoheadrightarrow H_{\canc} \hookrightarrow D $ is a transfer homomorphism. If this holds, then $\mathsf q (H_{\canc}) = \mathsf q (D)$, $D = H_{\canc}D^{\times}$, and $H_{\canc}^{\times} = D^{\times} \cap H_{\canc}$.

\item Suppose that $H$ is cancellative. Then $H$ is a transfer Krull monoid if and only if there is a Krull monoid $D$ with $H \subset D \subset \mathsf q (H)$ such that the inclusion $H \hookrightarrow D$ is a transfer homomorphism. If this holds, then $\mathsf q (H) = \mathsf q (D)$, $D = HD^{\times}$, and $H^{\times} = D^{\times} \cap H$.

\item If $R \subset S$ are integral domains with $\mathsf q (R)=\mathsf q (S)$, $S = RS^{\times}$, $R^{\times} = S^{\times} \cap R$, and $(R \colon S) \in \max (R)$, then the inclusion $R^{\bullet} \hookrightarrow S^{\bullet}$ is a transfer homomorphism. If, in addition,  $S$ is a Krull domain, then $R$ is a transfer Krull domain.
\end{enumerate}
\end{proposition}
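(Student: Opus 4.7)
I handle the three parts in the order (3), (2), (1): part~(3) is a self-contained computation in domains, part~(2) is the core construction inside a quotient group, and part~(1) reduces to part~(2) via the cancellative quotient $H_{\canc}$. For the inclusion $R^\bullet \hookrightarrow S^\bullet$ of part~3, property (T1) is immediate ($S^\bullet = R^\bullet S^\times$ from $S = RS^\times$, and $R^\bullet \cap S^\times = R^\times$ is given); the substance is (T2). Set $\mathfrak m = (R \DP S)$. I first verify two preliminaries: $\mathfrak m$ is also an ideal of $S$ (since $r \in \mathfrak m$, $s \in S$ give $(rs)S = r(sS) \subset rS \subset R$, so $rs \in \mathfrak m$); and $S/\mathfrak m$ is a field, because writing $\bar s = \bar r \bar u$ with $r \in R, u \in S^\times$ the image $\bar u$ is a unit of $S/\mathfrak m$ while $\bar r$ is either $0$ or a unit of $R/\mathfrak m$ by maximality of $\mathfrak m$. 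The pivotal lemma is: if $r \in R \setminus \mathfrak m$, $s \in S$, and $rs \in R$, then $s \in R$; writing $1 = m + ra$ with $m \in \mathfrak m$, $a \in R$ gives $s = ms + a(rs) \in \mathfrak m + R \subset R$. Now for $u = bc$ with $u \in R^\bullet$, $b, c \in S^\bullet$, decompose $b = r_b u_b$ and $c = r_c u_c$ with $r_b, r_c \in R^\bullet$ and $u_b, u_c \in S^\times$, and split on whether $u \in \mathfrak m$. If $u \notin \mathfrak m$, primality of $\mathfrak m$ in $S$ forces $r_b, r_c \notin \mathfrak m$, and $\alpha = u_b^{-1}$ yields $v = r_b \in R^\bullet$ and $w = u/r_b \in R^\bullet$ by the pivotal lemma. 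If $u \in \mathfrak m$, one of $b,c$ lies in $\mathfrak m$; the mixed case $b \in \mathfrak m$, $c \notin \mathfrak m$ uses $\alpha = u_c$ (giving $w = r_c$ and $v = bu_c$, with $r_c v = u \in R$ forcing $v \in R$ by the pivotal lemma), and the case $b, c \in \mathfrak m \subset R$ is trivial with $\alpha = 1$. The final sentence of part~3 is then clear, since $S$ Krull yields $S^\bullet$ Krull.

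\textbf{Part 2.} The ``if'' direction is by the definition of transfer Krull. For ``only if,'' fix a transfer homomorphism $\theta \colon H \to B$ with $B$ Krull, and extend to $\mathsf q(\theta) \colon \mathsf q(H) \to \mathsf q(B)$. The core construction is
\[
U \, = \, \mathsf q(\theta)^{-1}(B^\times) \, \subset \, \mathsf q(H), \qquad D \, = \, HU \, \subset \, \mathsf q(H) \,.
\]
Then $D$ is a submonoid of the group $\mathsf q(H)$, hence cancellative, and contains $H$. A short computation gives $D^\times = U$: if $d = hu \in D^\times$ has inverse $h'u' \in D$ in $\mathsf q(H)$, then $hh' = (uu')^{-1} \in U \cap H = H^\times$, so $h \in H^\times \subset U$ and hence $d \in U$; the reverse inclusion is obvious. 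The key structural step is that $\tilde\theta := \mathsf q(\theta)|D \colon D \to B$ is surjective (since $B = \theta(H)B^\times = \tilde\theta(HU)$), satisfies $\tilde\theta^{-1}(B^\times) = U = D^\times$ by definition, and induces a bijection $D_\red \to B_\red$ (injectivity: if $\tilde\theta(d_1) \simeq \tilde\theta(d_2)$ in $B$ then $d_1 d_2^{-1} \in \mathsf q(\theta)^{-1}(B^\times) = U$). Since $B_\red$ is Krull and $D$ is cancellative with $D_\red \cong B_\red$, $D$ itself is Krull. To check that $H \hookrightarrow D$ is a transfer homomorphism, (T1) is clear from $D = HU = HD^\times$ and $H \cap D^\times = H^\times$; for (T2), given $u \in H$ and $u = bc$ in $D$, apply (T2) of $\theta$ to $\theta(u) = \tilde\theta(b)\tilde\theta(c)$ to produce $u = vw$ in $H$ with $\theta(v) \simeq \tilde\theta(b)$ in $B$, and then $vb^{-1} \in \mathsf q(\theta)^{-1}(B^\times) = D^\times$ delivers $v \simeq b$ in $D$ (and similarly for $w$). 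The concluding identities $\mathsf q(D) = \mathsf q(H)$, $D = HD^\times$, $H^\times = D^\times \cap H$ are immediate from $D = HU$ and $D^\times = U$.

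\textbf{Part 1.} The ``if'' direction is definitional. For ``only if,'' any transfer homomorphism $\theta \colon H \to B$ with $B$ cancellative factors as $\theta = \bar\theta \circ \pi$, with $\pi \colon H \twoheadrightarrow H_{\canc}$ and $\bar\theta \colon H_{\canc} \to B$. A direct check (lifting $\sim$-classes to representatives in $H$ and invoking (T1), (T2) of $\theta$) shows $\bar\theta$ is itself a transfer homomorphism, so $H_{\canc}$ is a cancellative transfer Krull monoid. Part~2 then produces a Krull monoid $D$ with $H_{\canc} \subset D \subset \mathsf q(H_{\canc})$, together with a transfer inclusion $H_{\canc} \hookrightarrow D$ and the extension $\tilde\theta \colon D \to B$. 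To see that $\Theta = \iota \circ \pi \colon H \to D$ is a transfer homomorphism, (T1) is routine; for (T2), I bypass the question of whether $\pi$ itself satisfies (T2) -- which is delicate, since $\pi(u) = bc$ in $H_{\canc}$ only yields $u \sim \tilde b \tilde c$, not $u = \tilde b \tilde c$ -- by lifting factorizations through $\theta$ instead: from $\Theta(u) = bc$ in $D$ one reads off $\theta(u) = \tilde\theta(b)\tilde\theta(c)$ in $B$, and (T2) of $\theta$ produces $u = vw$ in $H$ with $\theta(v) \simeq \tilde\theta(b)$, which upgrades to $\Theta(v) \simeq b$ in $D$ exactly as in part~2. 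The concluding identities follow verbatim from those of part~2 applied to $H_{\canc} \hookrightarrow D$.

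\textbf{Main obstacle.} The delicate point is not any isolated computation but the architecture of part~2: producing a Krull overmonoid of $H$ inside $\mathsf q(H)$ from only the abstract data of a transfer homomorphism $\theta$ to an external Krull monoid $B$. The preimage construction $U = \mathsf q(\theta)^{-1}(B^\times)$ is what lets $D = HU$ simultaneously contain $H$ and inherit the Krull property from $B$ via the isomorphism $D_\red \cong B_\red$. Part~1's apparent difficulty -- that $\pi$ does not obviously satisfy (T2) -- is thereby sidestepped by lifting factorizations through the original $\theta$ rather than through $\pi$.
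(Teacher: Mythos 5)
Your proof is correct and, for parts 1 and 2, essentially the paper's argument: your overmonoid $D = H\,\mathsf q(\theta)^{-1}(B^{\times})$ coincides with the paper's $D=\{[a]^{-1}[b] \colon a,b\in H,\ \theta(a)\mid_{B}\theta(b)\}$ (one inclusion is trivial, the other is exactly an application of {\bf (T2)}), and your verifications of {\bf (T1)} and {\bf (T2)} --- in particular the device of lifting factorizations through the original $\theta$ rather than through the projection $H\twoheadrightarrow H_{\canc}$, which is precisely how the paper handles the non-cancellative case --- match the paper's. The only structural difference is the order: the paper proves part 1 directly and obtains part 2 as the cancellative special case, whereas you prove part 2 first and reduce part 1 to it; both are fine, since your {\bf (T2)} lift for $\Theta$ still goes through $\theta$. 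Two remarks. First, you never normalize $B$ to be reduced (the paper does, which is harmless by Definition \ref{5.1}(b)); without that normalization your claim that $\tilde\theta\colon D\to B$ is surjective can fail, because $\mathsf q(\theta)(U)$ may be a proper subgroup of $B^{\times}$ --- but what you actually use is only that the induced map $D_{\red}\to B_{\red}$ is bijective, and its surjectivity follows directly from {\bf (T1)}, so the conclusion that $D$ is Krull stands; alternatively, the paper's route (showing $\mathsf q(\theta^*)|D$ is a divisor homomorphism into $B$) sidesteps the issue entirely. Second, for part 3 the paper only cites \cite[Proposition 3.7.5]{Ge-HK06a}; your self-contained argument --- that $\mathfrak m=(R\DP S)$ is a common ideal of $R$ and $S$ with $S/\mathfrak m$ a field, together with the lemma that $r\in R\setminus\mathfrak m$, $s\in S$, $rs\in R$ imply $s\in R$, followed by the case split on whether $u$, $b$, $c$ lie in $\mathfrak m$ --- is correct and is a genuine addition to what the paper writes out.
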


\begin{proof}
1. Clearly, if $D$ is a Krull monoid and $\Theta \colon H \twoheadrightarrow  H_{\canc}\hookrightarrow D$ is a transfer homomorphism, then $H$ is a transfer Krull monoid. Conversely, suppose that $H$ is a transfer Krull monoid and let $\theta\colon H\rightarrow B$ be a transfer homomorphism, where $B$ is a reduced Krull monoid. For an element $a \in H$, we denote by $[a] \in H_{\canc}$ the congruence class of $a$.
If $a_1, a_2, c \in H$ such that $a_1c=a_2c$, then $\theta (a_1) \theta (c) = \theta (a_2) \theta (c)$ whence $\theta (a_1)=\theta (a_2)$.
Thus $\theta$ induces a homomorphism  $\theta^*\colon  H_{\canc}\rightarrow B$,  defined by $\theta^*([a])=\theta(a)$ for all $a\in H$.
Since $\theta$ is a transfer homomorphism, it is easy to see $\theta^*$ is a transfer homomorphism.
	
	 If $D=\{[a]^{-1}[b]  \colon a,b\in H, \theta(a)\mid_{B} \theta(b)\} \subset \mathsf q(H_{\canc})$, then the  homomorphism $\mathsf q (\theta^*)|D \colon D \rightarrow B$  is a divisor homomorphism, whence $D$ is a Krull monoid. By construction, we have $H_{\canc} \subset D\subset \mathsf q(H_{\canc})$ and thus $\mathsf q (H_{\canc})=\mathsf q (D)$.
	
To verify that $\Theta \colon H \twoheadrightarrow H_{\canc} \hookrightarrow D$ is a transfer homomorphism, we first note that $D^{\times} = \{[a]^{-1}[b] \colon a,b \in H \ \text{with} \  \theta (a) = \theta (b)\}$. Now let $[a]^{-1}[b] \in D$ where $a,b\in H$  with $\theta(a)\mid_{B} \theta(b)$. Then there exists $c \in B$ such that $\theta(b)=\theta(a)c$. Since $\theta$ is a transfer homomorphism, there exist $b_1,b_2\in H$ such that $b=b_1b_2$ and $\theta(b_1)=\theta(a)$, $\theta(b_2)=c$. It follows that $[a]^{-1}[b_1] \in D^{\times}$ and $[a]^{-1}[b] =([b_1]^{-1}[b_1b_2])([a]^{-1}[b_1])=[b_2]([a]^{-1}[b_1]) \in H_{\canc}D^{\times}$, whence $D=H_{\canc}D^{\times} = \Theta (H)D^{\times}$. Similarly, we get $H_{\canc}^{\times} = D^{\times} \cap H_{\canc}$ whence {\bf (T1)} holds.
	
To verify {\bf (T2)}, let $a\in H$ and $d_1,d_2\in D$ be given such that $[a]=d_1d_2$. Then $\theta(a)=\theta^*(d_1)\theta^*(d_2)$ and hence there exist $a_1,a_2\in H$ such that $a=a_1a_2$ and $\theta^*([a_1])=\theta^*(d_1)$, $\theta^*([a_2])=\theta^*(d_2)$. It follows that $[a_1]=d_1 (d_1^{-1}[a_1]) \in d_1D^{\times}$ and $[a_2]=d_2 (d_2^{-1}[a_2]) \in d_2D^{\times}$. Therefore $\Theta$ is a transfer homomorphism.		

2.  is a special case of 1. and for 3. we refer to  \cite[Proposition 3.7.5]{Ge-HK06a}.
\end{proof}

\begin{example}[\bf Examples of transfer Krull monoids] \label{5.4}~

1. Since the identity map is a transfer homomorphism, every Krull monoid is a transfer Krull monoid. This generalizes to not necessarily commutative, but normalizing Krull monoids as studied in the theory of Noetherian semigroup algebras (\cite{Je-Ok07a, Ge13a, Ok16a}).

2. Every half-factorial monoid is transfer Krull. Indeed, let $H$ be half-factorial and let $G = \{0\}$ be the trivial group. Then $\theta \colon H \to \mathcal B (G)$, defined by $\theta (u) = 0$ for every $u \in \mathcal A (H)$ and $\theta (\epsilon) = 1$ for every $\epsilon \in H^{\times}$, is a transfer homomorphism.

3. Main examples of transfer Krull monoids stem from non-commutative ring theory whence they are beyond the scope of this article. Nevertheless, we mention one example and refer the interested reader to \cite{Ba-Ba-Go14, Ba-Sm15, Sm16a, Sm19a} for more. Let $\mathcal O_K$ be the ring of integers in an algebraic number field $K$, $A$ a central simple $K$-algebra, and $R$ a classical maximal $\mathcal O_K$-order of $A$. Then $R^{\bullet}$ is transfer Krull if and only if every stably free left $R$-ideal is free, and if this holds then there is a transfer homomorphism $\theta \colon R^{\bullet} \to \mathcal B (G)$ for some finite abelian group $G$ (\cite{Sm13a}).

4. The assumptions made in Proposition \ref{5.3}.3 hold true for $(K+\mathfrak m)$-domains (\cite[Proposition 3.7.4]{Ge-HK06a}). Further applications in the setting of seminormal weakly Krull monoids and domains are given in  (\cite[Proposition 4.6 and Theorem 5.8]{Ge-Ka-Re15a}.

5. Module theory offers a wealth of non-cancellative finitely generated transfer Krull monoids. We discuss a simple example. Let $B$ be an additive Krull monoid with $\mathcal A (B) = \{u_1, u_2, v\}$ such that $u_1+u_2=v+v+v$ is the only relation among the atoms. Let $H$ be the free abelian monoid with basis $\{M_1, M_2, M_2', Q\}$ modulo the relation generated by $M_1+M_2=M_1+M_2'=Q+Q+Q$. By the Theorem of Bergman-Dicks (see Example \ref{2.1}.1), $H$ is isomorphic to a monoid of modules $\mathcal V ( \mathcal C)$, where $\{M_1, M_2, M_2', Q\}$ is a set of representatives of indecomposable modules in $\mathcal C$. Clearly, $\theta \colon H \to B$, defined by $\theta (M_1)=u_1$, $\theta (M_2)=\theta (M_2')=u_2$, and $\theta (Q)=v$, is a transfer homomorphism.
\end{example}

\begin{theorem}[\bf Arithmetic of transfer Krull monoids] \label{5.5}
Let $H$ be a transfer Krull monoid of finite type.
Then $\mathcal R (H) = \{ q \in \Q \colon 1 \le q \le \rho (H)\}$ and all arithmetical finiteness results of Theorem \ref{3.1} hold.
\end{theorem}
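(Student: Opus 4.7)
The plan is to reduce all assertions to the (finitely generated) monoid of zero-sum sequences furnished by the defining transfer homomorphism, and then pull everything back via Proposition \ref{4.5}. I would fix a transfer homomorphism $\theta \colon H \to B := \mathcal B(G_0)$ with $G_0$ finite, as supplied by Definition \ref{5.1}(b). By Example \ref{4.2}.5, $B$ is a reduced Krull monoid, and since $G_0$ is finite it is a normal affine monoid, hence finitely generated and cancellative. Thus Theorem \ref{3.1} applies to $B$.

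Next, Proposition \ref{4.5}(1) yields the identifications $\mathcal L(H) = \mathcal L(B)$, $\Delta(H) = \Delta(B)$, $\mathcal R(H) = \mathcal R(B)$, $\rho(H) = \rho(B)$, and $\mathcal U_k(H) = \mathcal U_k(B)$ for every $k \in \N$. From these the following parts of Theorem \ref{3.1} transfer automatically: finiteness of $\Delta(H)$ and rationality of $\rho(H)$ from part (1); the AAP structure of each union $\mathcal U_k(H)$ from part (2); and the AAMP structure of every $L \in \mathcal L(H) = \mathcal L(B)$ from part (3) applied to the cancellative monoid $B$. The elasticity is also accepted for $H$: if $\rho(B) = \rho\bigl(\mathsf L_B(b)\bigr)$ for some $b \in B$ and $a \in H$ is any preimage with $\theta(a) \simeq b$, then $\mathsf L_H(a) = \mathsf L_B(b)$ realizes $\rho(H)$.

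For the set-of-elasticities identity $\mathcal R(H) = \{q \in \Q \colon 1 \le q \le \rho(H)\}$, which is sharper than the interval $[r,\rho(H)]$ guaranteed by Theorem \ref{3.1}(1), I would argue inside $B = \mathcal B(G_0)$: combining atoms (whose sets of lengths are $\{1\}$, hence have elasticity $1$) with elements realizing $\rho(B)$, a standard interpolation via suitable products and powers produces, for every rational $q \in [1,\rho(B)]$, an element whose set of lengths has elasticity exactly $q$. For the catenary degree I would apply Proposition \ref{4.5}(2): since $B$ is finitely generated, $\mathsf c(B) < \infty$ by Theorem \ref{3.1}(1), so once the fibre catenary degree $\mathsf c(H,\theta)$ is bounded, we obtain $\mathsf c(H) < \infty$ and hence $\Ca(H) \subset [1,\mathsf c(H)] \cap \N$ is finite.

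The main obstacle is controlling $\mathsf c(H,\theta)$, which is not automatic for an abstract transfer homomorphism; in the standard setup this is secured by the structural properties of the canonical block transfer map (cf.\ Proposition \ref{4.6}, where $\mathsf c(H, \boldsymbol \beta) \le 2$). Together with the interval-of-elasticities identity, whose proof relies on the finer arithmetic of $\mathcal B(G_0)$ rather than on generic finitely-generated-monoid theory, these constitute the technically nontrivial portion of the argument; all remaining statements are immediate consequences of the transfer principle once Theorem \ref{3.1} is invoked at $B$.
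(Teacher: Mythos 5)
Your reduction coincides with the paper's: the published proof is exactly ``since $G_0$ is finite, $\mathcal B(G_0)$ is finitely generated, so Theorem \ref{3.1} applies to $\mathcal B(G_0)$ and is pulled back by Proposition \ref{4.5}'', with the identity $\mathcal R(H)=\{q\in\Q\colon 1\le q\le\rho(H)\}$ not proved but cited from \cite[Theorem 3.1]{Ge-Zh19a}. Two of your steps deserve comment. First, the ``standard interpolation'' for $\mathcal R(H)$ is not routine. Theorem \ref{3.1}.1 only yields a full rational interval $[r,\rho(H)]$ for some unspecified $r\ge 1$, and the upgrade to $r=1$ fails for general finitely generated cancellative monoids (numerical monoids are strongly primary, so by Lemma \ref{7.1}.3 their sets of elasticities have a gap above $1$). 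The obstruction to the naive product construction is that $\mathsf L(ab)$ may strictly contain $\mathsf L(a)+\mathsf L(b)$, so the elasticity of a product of test elements need not be the intended ratio; making this work uses the specific combinatorics of zero-sum sequences over a finite set and is the main theorem of \cite{Ge-Zh19a}. Unless you quote that result, this step is a genuine gap.

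Second, your reservation about $\mathsf c(H,\theta)$ is well founded, and the paper's own proof is silent on it. Definition \ref{5.1} only provides \emph{some} transfer homomorphism $\theta\colon H\to\mathcal B(G_0)$, and for such a $\theta$ no bound on the fibre catenary degree exists in general: every half-factorial monoid is transfer Krull of finite type via $\theta\colon H\to\mathcal B(\{0\})$ (Example \ref{5.4}.2), the fibre of $\theta$ over a factorization of $\theta(a)$ is then all of $\mathsf Z(a)$, and there are half-factorial cancellative monoids with unbounded catenary degrees --- for instance the coproduct over $n\ge 2$ of the reduced affine monoids generated by atoms $u_1,\dots,u_n,v_1,\dots,v_n$ subject to the single defining relation $u_1\cdots u_n=v_1\cdots v_n$, in which the element $u_1\cdots u_n$ has exactly two factorizations at distance $n$. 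So the finiteness of $\Ca(H)$ cannot be extracted from Proposition \ref{4.5}.2 alone; it requires either the genuinely Krull case, where $\mathsf c(H,\boldsymbol\beta)\le 2$ by Proposition \ref{4.6}, or the additional hypothesis $\mathsf c(H,\theta)<\infty$. The remaining items (set of distances, unions, AAMP structure, accepted and rational elasticity) transfer exactly as you describe, and there your argument matches the paper's.
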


\begin{proof}
Let $\theta \colon H \to \mathcal B (G_0)$ be a transfer homomorphism where $G_0$ is a finite subset of an abelian group. Since $G_0$ is finite, $\mathcal B (G_0)$ is finitely generated, whence the finiteness results of Theorem \ref{3.1} hold for $\mathcal B (G_0)$ and they can be pulled back to $H$ by Proposition \ref{4.5}. The claim on $\mathcal R (H)$ follows from \cite[Theorem 3.1]{Ge-Zh19a}.
\end{proof}

Our next theorem shows that, for the class of finitely generated Krull monoids, the finiteness result for the set of distances and for the set of catenary degrees, as well as the structural result for sets of lengths (given in Theorems \ref{3.1} and \ref{5.5}), are best possible.

\begin{theorem}[\bf Realization Results] \label{5.6}~

\begin{enumerate}
\item For every finite nonempty subset $C \subset \N_{\ge 2}$ there is a finitely generated Krull monoid $H$ with finite class group such that $\Ca (H) = C$.

\item For every finite nonempty set $\Delta \subset \N$ with $\min \Delta  = \gcd \Delta$ there is a finitely generated Krull monoid $H$ such that $\Delta (H) = \Delta$.

\item For every $M \in \N_0$ and every finite nonempty set $\Delta$ there is a finitely  generated Krull monoid $H$ with finite class group such that the following holds: for every {\rm AAMP} L with difference $d \in \Delta$ and bound $M$ there is some $y^{}_L \in \N$ such that $y+L \in \mathcal L (H)$ for all $y \ge y^{}_L$.
\end{enumerate}
\end{theorem}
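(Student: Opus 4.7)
The plan is to realize all three statements by constructing suitable monoids of zero-sum sequences. By Theorem \ref{4.3}.3, every reduced Krull monoid is isomorphic to $\mathcal B(G_0)$ for some subset $G_0$ of an abelian group, and when $G_0$ is a finite subset of a finite abelian group, $\mathcal B(G_0)$ is a finitely generated Krull monoid with finite class group. By Proposition \ref{4.5} and Proposition \ref{4.6}, the invariants $\Ca(H)$, $\Delta(H)$, and $\mathcal L(H)$ can be read off directly from $\mathcal B(G_0)$, so each of the three parts reduces to a purely combinatorial construction of a finite $G_0$.

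For part 1, the strategy is to realize each single catenary degree as a building block and combine via coproducts. For an integer $c \ge 2$, a small explicit $G_0^{(c)}$ in a cyclic group yields $\mathcal B(G_0^{(c)})$ with $\Ca = \{c\}$; for instance, using $G_0^{(c)} = \{1, -1\} \subset C_c$ for $c \ge 3$ one checks that every element with a nontrivial relation among its atom-decompositions has catenary degree exactly $c$, and a separate small subset handles $c=2$. Given $C = \{c_1, \ldots, c_k\}$, form the coproduct $H = \mathcal B(G_0^{(c_1)}) \times \cdots \times \mathcal B(G_0^{(c_k)})$, which embeds as $\mathcal B(G_0^{(c_1)} \sqcup \cdots \sqcup G_0^{(c_k)})$ in the direct sum of the ambient cyclic groups. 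Factorizations split coordinate-wise, so $\Ca(H) = \Ca(H_1) \cup \cdots \cup \Ca(H_k) = C$, and $H$ is a finitely generated Krull monoid with finite class group.

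Part 2 is a classical realization theorem. The condition $\min \Delta = \gcd \Delta$ is necessary because $\min \Delta(H)$ divides every element of $\Delta(H)$ for any atomic monoid. For sufficiency, I would construct $G_0$ inside a cyclic group $C_N$ with $N$ sufficiently large and divisible by the exponents suggested by $\Delta$, so that the atoms of $\mathcal B(G_0)$ admit exactly the prescribed family of length-changing relations. For each $d \in \Delta$, design a pair of minimal zero-sum sequences whose common multiple has two atomic factorizations of length difference exactly $d$. The main technical task is the upper bound $\Delta(\mathcal B(G_0)) \subset \Delta$; this is handled by enumerating all pairs of atomic factorizations of the same zero-sum block and verifying that no unwanted length gap appears, using the rigidity of minimal zero-sum relations in $C_N$.

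Part 3 is the deepest and is essentially Schmid's realization theorem for AAMPs, and I expect this to be the main obstacle. Given $M$ and a finite $\Delta$, the goal is a single finitely generated Krull monoid in which every AAMP with difference $d \in \Delta$ and bound $M$ appears (after some shift $y_L$) as a set of lengths. My plan is to take $G$ to be a finite abelian group large enough to host, simultaneously, a generator of an arithmetic progression of difference $d$ for each $d \in \Delta$ and a reservoir of short zero-sum sequences capable of fabricating arbitrary boundary sets $L' \subset [-M, -1]$ and $L'' \subset \max L^* + [1, M]$. For a given AAMP $L = L' \cup L^* \cup L''$ with difference $d$, the realizing element has the form $a_L = b_d^{n_L} \cdot w_L$, where $b_d^{n_L}$ is a large power of a fixed element whose set of lengths is a long arithmetic progression of difference $d$ (producing the periodic middle) and $w_L$ is a short correction term built from the reservoir. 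The hard part is a separation argument: one must show that the short corrections interact with the long periodic factor only through additive shifts, so that no extraneous length arises. This is arranged by placing the reservoir atoms in a direct summand of $G$ disjoint from the support of $b_d$, and by taking $n_L$ large enough that the shift $y_L$ absorbs all cross-terms.
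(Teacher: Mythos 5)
The paper does not prove this theorem at all: it cites \cite[Proposition 3.2]{Fa-Ge19a} for part 1, \cite{Ge-Sc17a} for part 2, and \cite{Sc09a} for part 3. You are therefore attempting to reprove three separate research results from scratch. Your reduction to monoids of zero-sum sequences over finite subsets of finite abelian groups is the right frame, and your part 1 is essentially a correct argument: for $G_0=\{g,-g\}\subset C_c$ with $c\ge 3$ the atoms are $g^c$, $(-g)^c$, $g(-g)$, every element with more than one factorization has all its factorizations linked by adjacent steps replacing $(g(-g))^c$ with $g^c(-g)^c$ at distance $c$, so $\Ca=\{c\}$; and since factorizations in a coproduct split coordinatewise, $\mathsf c\bigl((a_1,\dots,a_k)\bigr)=\max_i \mathsf c(a_i)$, whence $\Ca$ of the coproduct is the union of the $\Ca(H_i)$. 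The one missing piece there is the $c=2$ block: by \eqref{dist-2} a cancellative monoid with $\mathsf c(H)=2$ must be half-factorial but not factorial, so you need to exhibit a concrete half-factorial, non-factorial finitely generated Krull monoid; you assert its existence without producing it.

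Parts 2 and 3, however, are not proofs but proof plans, and the steps you yourself flag as ``the main technical task'' and ``the hard part'' are precisely the entire content of the two cited papers. For part 2, producing for each $d\in\Delta$ a relation of length difference $d$ is easy; the theorem is the containment $\Delta(\mathcal B(G_0))\subset\Delta$, and ``enumerating all pairs of atomic factorizations and verifying that no unwanted length gap appears'' is not an argument until $G_0$ is specified --- for a generic subset of $C_N$ this containment simply fails, and arranging it is the whole of \cite{Ge-Sc17a}. (Note also that the coproduct trick from part 1 is not available here, since sumsets of sets of lengths can create new distances.) For part 3, the ``separation argument'' ensuring that the reservoir producing the boundary sets $L'$, $L''$ interacts with the periodic factor only through shifts, while a single finite group hosts all differences $d\in\Delta$ and all bounded end pieces simultaneously, is the heart of Schmid's realization theorem; placing the reservoir in a disjoint direct summand does not by itself prevent extraneous lengths, because an atom of $\mathcal B(G_0)$ may have support meeting both summands. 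As written, parts 2 and 3 remain unproved.
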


\begin{proof}
For 1. we refer to \cite[Proposition 3.2]{Fa-Ge19a}, for 2.  to \cite{Ge-Sc17a}, and for 3. see \cite{Sc09a}.
\end{proof}

\begin{remark} \label{5.7}~

Each of the following monoids respectively domains has the property that every finite nonempty subset of $\N_{\ge 2}$ occurs as a set of lengths.

\begin{itemize}
\item (Frisch) The ring $\Int (\Z)$ of integer-valued polynomials over $\Z$ (\cite{Fr13a, Fr-Na-Ri19a}).

\item (Kainrath) Krull monoids with infinite class group and prime divisors in all classes (\cite{Ka99a} and \cite[Theorem 7.4.1]{Ge-HK06a}).
\end{itemize}
The assumption, that every class contains a prime divisor, is crucial in Kainrath's Theorem. Indeed, on the other side of the spectrum, there is the conjecture that every abelian group is the class group of a half-factorial Krull monoid (even of a half-factorial Dedekind domain; \cite{Gi06a}).
According to a conjecture of Tringali, the power monoid $\mathcal P_{\fin, 0} (\N_0)$ (and hence the monoid $\mathcal P_{\fin} (\N_0)$) has the property that every finite nonempty subset of $\N_{\ge 2}$ occurs as a set of lengths. This conjecture is supported by a variety of results such as $\Ca \big( \mathcal P_{\fin,0} (\N_0) \big) = \Delta \big( \mathcal P_{\fin, 0} (\N_0) \big) = \N$ (\cite[Theorem 4.11]{Fa-Tr18a}).
\end{remark}

Thus both extremal families,
\[
\bigl\{ \{k\} \colon k \in \N_0 \bigr\} \subset \mathcal P_{\fin} (\N_0) \quad \text{and} \quad \bigl\{ \{0\}, \{1\} \bigr\} \cup \mathcal P_{\fin} ( \N_{\ge 2}) \subset \mathcal P_{\fin} (\N_0) \,,
\]
are systems of sets of lengths of BF-monoids.
Clearly, every subset $\mathcal L \subset \mathcal P_{\fin} (\N_0)$, that is the system of sets of lengths of a BF-monoid $H$  (i.e., $\mathcal L = \mathcal L (H)$) with $H \ne H^{\times}$,  has the following properties.
\begin{itemize}
\item[(a)] $\{0\}, \{1\} \in \mathcal L $ and  all other sets of $\mathcal L $ lie in $\N_{\ge 2}$.

\item[(b)] For every $k \in \N_0$ there is  $L \in \mathcal L $ with $k \in L$.

\item[(c)] If $L_1, L_2 \in \mathcal L $, then there is $L \in \mathcal L $ with $L_1+L_2 \subset L$.
\end{itemize}

\noindent
This gives rise to the following realization problem.

\smallskip
\noindent
\hypertarget{B}{{\bf Problem B.}}  {\it Which subsets $\mathcal L \subset \mathcal P_{\fin} (\N_0)$  satisfying Properties (a) - (c)  can be realized as systems of sets \phantom{{\bf Problem A.}} of lengths of a \BF-monoid?}

\smallskip
\noindent
Note that   every system $\mathcal L$ with (a) - (c) and with $\Delta (\mathcal L) \ne \emptyset$  satisfies the property $\min \Delta ( \mathcal L) = \gcd \Delta ( \mathcal L)$ (\cite[Proposition 2.9]{F-G-K-T17}), which holds for all systems stemming from BF-monoids.

We end this section with a list of monoids and domains that are not transfer Krull and we will discuss such monoids in Section \ref{7}.

\begin{example}[\bf Monoids and domains that are not transfer Krull] \label{5.8}~

1. According to Remark \ref{5.7} a Krull monoid with infinite class group having prime divisors in all classes and $\Int (\Z)$ have the same system of sets of lengths. Nevertheless, $\Int (\Z)$ is not transfer Krull (\cite{Fr-Na-Ri19a}). Similarly, the monoid of polynomials having nonnegative integer coefficients is not transfer Krull (\cite[Remark 5.4]{Ca-Fa19a}).

2. $\mathcal P_{\fin} (\N_0)$ and $\mathcal P_{\fin, 0} (\N_0)$ are reduced BF-monoids that have no transfer homomorphism to any cancellative monoid (\cite[Proposition 4.12]{Fa-Tr18a}).

3. Let $G$ be a finite group and let $\mathcal B (G)$ be the monoid of product-one sequences over $G$. If $G$ is abelian, then $\mathcal B (G)$ is Krull by Example \ref{4.2}.5 and hence transfer Krull. Jun Seok Oh showed that $\mathcal B (G)$ is transfer Krull if and only if it is  Krull if and only if $G$ is abelian (\cite[Proposition 3.4]{Oh19b}).

4. An additive submonoid of the nonnegative rational numbers (distinct from $\{0\}$) is transfer Krull if and only if it is isomorphic to $(\N_0, +)$  (\cite[Theorem 6.6]{Go18a}).

5. In Section \ref{7} we show that strongly primary monoids and monoids of ideals of weakly Krull monoids are  transfer Krull if and only if they are half-factorial (Lemma \ref{7.1} and Proposition \ref{7.3}).
\end{example}

\section{Transfer Krull monoids over finite abelian groups} \label{6}

In this section we discuss transfer Krull monoids $H$ having a transfer homomorphism $\theta \colon H \to \mathcal B (G)$, where $G$ is a finite abelian group. By Proposition \ref{4.6}, this setting includes Krull monoids with finite class groups having prime divisors in all classes. Rings of integers of algebraic number fields (\cite[Corollary 4.4.3]{HK20a}), monoid algebras that are Krull (\cite{Ch11a}), and many other Krull domains have finite class group and prime divisors in all classes. This is the reason why this setting has received the closest attention in factorization theory.

Let $H$ be a transfer Krull monoid over a finite abelian group $G$, say $G \cong C_{n_1} \oplus \ldots \oplus C_{n_r}$ with $1 \le n_1 \t \ldots \t n_r$. It is usual to write $* (G)$ instead of $* ( \mathcal B (G))$ for all invariants we had. In particular, we set
\[
\mathcal L (G) := \mathcal L \big( \mathcal B (G) \big), \Delta (G) := \Delta \bigl( \mathcal B (G) \bigr), \mathcal A (G) := \mathcal A \big( \mathcal B (G) \big),   \quad \text{and so on}.
\]
By Propositions \ref{4.5} and \ref{4.6}, the arithmetical invariants  of $H$ and of $\mathcal B (G)$ coincide (except for some trivial exceptions), whence $\mathcal L (H) = \mathcal L (G)$ and so on. The long term goal is to determine the precise value of  these invariants
in terms of  the group invariants $(n_1, \ldots, n_r)$, which is done with methods from additive combinatorics. We refer to \cite[Chapter 1]{Ge-Ru09} for a detailed discussion of the interplay of factorization theory in $\mathcal B (G)$ and additive combinatorics  and  to the survey \cite{Sc16a} for the state of the art. We have a quick glance at this interplay, introduce a key combinatorial invariant,  and present a main problem.

Since the group $G$ is finite, $\mathcal B (G)$ is finitely generated whence $\mathcal A (G)$ is finite, and the {\it Davenport constant} $\mathsf D (G)$, defined as
\[
\mathsf D (G) = \max \{ |U| \colon U \in \mathcal A (G) \}
\]
is a positive integer. The Davenport constant and the structure of atoms $U \in \mathcal A (G)$ with $|U|=\mathsf D (G)$ play an important role in all arithmetical investigations. So we have, for example,  $\rho (G) = \mathsf D (G)/2$ and  $\mathsf c (G) \le \mathsf D (G)$. It needs just a few lines to verify that
$1 + \sum_{i=1}^r (n_i-1) \le \mathsf D (G) \le |G|$,
whence $\mathsf D (G)=|G|$ if $G$ is cyclic. If  $G$ is a $p$-group or has rank $r \le 2$, then $\mathsf D (G) = 1 + \sum_{i=1}^r (n_i-1)$ but this equality does not hold in general. The precise value of $\mathsf D (G)$ is unknown even for rank three groups and for groups of the form $G=C_n^{r}$, where $r, n \in \N$  (\cite{Gi18a}).

What do we know about $\mathcal L (H) = \mathcal L (G)$? It is easy to verify that
\begin{equation} \label{system-1}
\mathcal L (C_1) = \mathcal L (C_2) = \big\{ \{k\} \colon k \in \N_0 \big\} \qquad \text{and  that}
\end{equation}
\begin{equation} \label{system-2}
\mathcal L (C_3) = \mathcal L (C_2 \oplus C_2) = \bigl\{ y
      + 2k + [0, k] \, \colon \, y,\, k \in \N_0 \bigr\} \,.
\end{equation}
The above four groups are precisely the groups $G$ having Davenport constant $\mathsf D (G) \le 3$. Apart from them, the systems $\mathcal L (G)$ are also written down explicitly   for all groups $G$ with  $\mathsf D (G) \in [4, 5]$ (\cite{Ge-Sc-Zh17b}). Full descriptions of systems $\mathcal L (G)$ are hard to get, whence  the focus of research is to get at least a good understanding for parameters controlling sets of lengths.
We cite one result and this is in sharp contrast to Theorem \ref{5.6}.

\begin{theorem} \label{6.1}
Let $G$ be a finite abelian group.
\begin{enumerate}
\item $($Carlitz $1960)$ $|L|=1$ for every $L \in \mathcal L (G)$ if and only if $|G| \le 2$.

\item The unions $\mathcal U_k (G)$ are finite intervals for every $k \in \N$.

\item The set of distances $\Delta (G)$ and the set of catenary degrees $\Ca (G)$ are finite intervals.
\end{enumerate}
\end{theorem}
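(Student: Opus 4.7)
The plan is to establish the three parts via direct constructions in $\mathcal B(G)$ combined with the general finiteness results of Theorems \ref{3.1} and \ref{5.5} applied to the finitely generated Krull monoid $\mathcal B(G)$.

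For Part 1 (Carlitz), the forward direction is straightforward: $\mathcal B(\{0\})$ is freely generated by the atom $0$ and, writing $C_2 = \{0,g\}$, $\mathcal B(C_2)$ is freely generated by $\{0, g^2\}$, so both are factorial and hence half-factorial. For the converse I split on the structure of $G$. If $G$ has an element $g$ of order $n \ge 3$, then
\[
B \; = \; g^n \cdot (-g)^n \; \in \; \mathcal B(G)
\]
is atomic with both the factorization $g^n \cdot (-g)^n$ (using the two atoms $g^n$ and $(-g)^n$ of length $n$) and the factorization $\bigl(g \cdot (-g)\bigr)^n$ (using $n$ copies of the length-$2$ atom $g \cdot (-g)$), so $\{2, n\} \subset \mathsf L(B)$. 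Otherwise $G \cong C_2^r$ with $r \ge 2$; choosing $\F_2$-linearly independent $e_1, e_2 \in G$, the zero-sum sequence
\[
B' \; = \; e_1^2 \cdot e_2^2 \cdot (e_1+e_2)^2
\]
admits the factorizations $e_1^2 \cdot e_2^2 \cdot (e_1+e_2)^2$ (length $3$) and $\bigl(e_1 \cdot e_2 \cdot (e_1+e_2)\bigr)^2$ (length $2$), so $\{2, 3\} \subset \mathsf L(B')$.

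For Part 2, finiteness of each $\mathcal U_k(G)$ follows from the crude bound $\rho_k(G) \le k\,\mathsf D(G)/2$: if $k \in \mathsf L(B)$ and $B$ involves no copies of the atom $0$, then $|B| \le k\,\mathsf D(G)$ because each atom has length at most $\mathsf D(G)$, while every factorization of $B$ has length at most $|B|/2$ since each atom has length at least $2$ (the atom $0$ contributes trivially and is treated separately). For the interval property, the plan is to show that whenever $j, \ell \in \mathcal U_k(G)$ with $\ell > j + 1$, the intermediate integer $\ell - 1$ also lies in $\mathcal U_k(G)$. Starting from an element $B$ with $\{k, \ell\} \subset \mathsf L(B)$, one produces a companion element $B^\sharp$ realising length $\ell - 1$ together with length $k$ by a one-atom replacement: choose an atom $V$ of length $\ge 3$ appearing in the long factorization, multiply $B$ by a short zero-sum sequence whose combination with $V$ admits a controlled refinement, and track the effect on both the long and the short factorization.

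For Part 3, finiteness of $\Delta(G)$ and $\Ca(G)$ is immediate from Theorem \ref{3.1}.1 applied to the finitely generated Krull monoid $\mathcal B(G)$. The interval property is deeper. For $\Delta(G)$, the approach is downward from $\max \Delta(G) = \mathsf D(G) - 2$: fix an atom $U \in \mathcal A(G)$ with $|U| = \mathsf D(G)$, whence $U \cdot (-U)$ has set of lengths containing $\{2, \mathsf D(G)\}$ and realises the distance $\mathsf D(G) - 2$; then, for each prescribed $d \in [1, \mathsf D(G) - 3]$, modify $U \cdot (-U)$ by multiplying with carefully chosen additional zero-sum sequences that insert a new length at distance exactly $d$ from an existing one. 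The interval property for $\Ca(G)$ is handled analogously, using \eqref{dist-2} as a lower bound and constructing, for each prescribed catenary value, two factorizations of a single element at the required distance that cannot be joined by a shorter chain.

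The main obstacle will be Part 3: while the finiteness statements are immediate from the cited general theorems, realising every intermediate distance or catenary degree by an explicit zero-sum sequence is delicate and sensitive to the fine structure of $G$ (cyclic versus higher rank, $p$-groups, the precise shape of atoms of length $\mathsf D(G)$, and the combinatorics of zero-sum free sequences over $G$). Part 2's interval property is subtler than its finiteness but is typically more tractable than Part 3, since within $\mathcal B(G)$ one has considerable freedom to recombine atoms across two different factorizations of the same element in order to tune lengths, whereas realising a single prescribed distance or catenary degree requires controlling the entire set of lengths of the constructed element.
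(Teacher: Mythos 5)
Your Part 1 is correct and is essentially identical to the paper's argument (the elements $g^n(-g)^n=(g(-g))^n$ for an element of order $n\ge 3$, and $e_1^2e_2^2(e_1+e_2)^2=(e_1e_2(e_1+e_2))^2$ in the elementary $2$-group case), and the finiteness assertions in Parts 2 and 3 are indeed immediate from Theorem \ref{3.1} applied to the finitely generated monoid $\mathcal B (G)$. The paper, however, does not prove the interval properties at all: it cites them (the unions from \cite[p.~36]{Ge-Ru09}, the intervals $\Delta (G)$ and $\Ca (G)$ from \cite[Theorem 4.1]{Ge-Zh19a}). These are the substantive content of Parts 2 and 3, and in your write-up they remain unproved: for Part 2 you only describe a ``plan'' involving a one-atom replacement that is never carried out, and for Part 3 you acknowledge the construction is ``delicate.'' A sketch of intent is not a proof, so there is a genuine gap here.

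Worse, your proposed route for Part 3 starts from a false premise. You take for granted that $\max \Delta (G) = \mathsf D (G) - 2$ and that $U \cdot (-U)$ with $|U| = \mathsf D (G)$ ``realises the distance $\mathsf D (G) - 2$.'' Neither holds in general. First, $\{2, \mathsf D (G)\} \subset \mathsf L \bigl( U(-U) \bigr)$ does not put $\mathsf D (G) - 2$ into $\Delta (G)$: the set of distances records \emph{successive} gaps, and $\mathsf L \bigl( U(-U) \bigr)$ typically contains intermediate lengths. Second, the equality $\max \Delta (G) = \mathsf D (G) - 2$ fails already for $G = C_3^2$: by the paper's own Theorem \ref{6.2}, every set of lengths over $C_3^2$ is an interval, so $\Delta (C_3^2) = \{1\}$, whereas $\mathsf D (C_3^2) - 2 = 3$. (The paper also warns that $\max \Delta (G)$ and $\max \Ca (G)$ are known only for special classes of groups, so no argument can begin by pinning down $\max \Delta (G)$ explicitly.) Any correct proof of Part 3 must therefore proceed differently — e.g., as in \cite{Ge-Zh19a}, by showing directly that for each $d$ with $1 \le d < \max \Delta(G)$ (whatever that maximum is) some element realises the distance $d$ — rather than descending from a presumed value of the maximum.
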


\begin{proof}
1. If $|G| \le 2$, then $\mathcal B (G)$ is factorial whence half-factorial. Conversely, suppose that $|G|\ge 3$. If $g \in G$ with $\ord (g)=n \ge 3$, then $U=g^n$, $-U = (-g)^n$, and $V = (-g)g$ are atoms of $\mathcal B (G)$. Then  $U(-U)=V^n$ and $\mathsf L (V^n)= \{2,n\}$. If $e_1, e_2 \in G$ with $\ord (e_1)=\ord (e_2)=2$, then $e_0 = e_1+e_2 \in G$, $V = e_0e_1e_2\in\mathcal A(G)$, and $U_i = e_i^2 \in \mathcal A (G)$ for $i \in [0,2]$. Then $U_0U_1U_2=V^2$ and $\mathsf L (V^2) = [2,3]$.

2. and 3. The unions $\mathcal U_k (G)$ are intervals by \cite[page 36]{Ge-Ru09} and 3. follows from \cite[Theorem 4.1]{Ge-Zh19a}.
\end{proof}

Suppose that $\mathsf D (G) \ge 4$. The minima of the sets $\mathcal U_k (G)$ can be expressed in terms of their maxima, and for the maxima $\rho_k (G) = \max \mathcal U_k (G)$ we have the following. For every $k \in \N$,  $\rho_{2k}(G)=k \mathsf D (G)$, and $k \mathsf D (G) + 1 \le \rho_{2k+1} (G) \le k \mathsf D (G) + \mathsf D (G)/2$ (for all this and for  more on $\rho_{2k+1}(G)$ see \cite{Sc16a}). It is easy to see that $\min \Delta (G)=1$ and that $\min \Ca (G) = 2$.
The maxima of $\Delta (G)$ and of $\Ca (G)$ are known only for very special classes of groups which includes cyclic groups (\cite{Sc16a}).

To sum up our discussion so far, given a transfer Krull monoid $H$ over $G$, arithmetical invariants of $H$ depend only on $G$ (in particular, $\mathcal L (H) = \mathcal L (G)$) and the goal is to describe them in terms of the group invariants. The associated inverse problem (known as the Characterization Problem) asks whether the system $\mathcal L (G)$ is characteristic for the group. More precisely, it reads as follows.

\smallskip
\noindent
\hypertarget{C}{{\bf Problem C.}} {\it Let $G$ be a finite abelian group with Davenport constant $\mathsf D (G) \ge 4$, and let $G'$ be an abelian \phantom{{\bf Problem C.}} group with $\mathcal L (G) = \mathcal L (G')$. Are $G$ and $G'$ isomorphic?}

\smallskip
\noindent
In spite of results stating that the typical set of lengths in $\mathcal L (G)$  is an interval (e.g., see \eqref{density}), the standing conjecture is that the exceptional sets of lengths in $\mathcal L (G)$ are characteristic for the group. In other words, the conjecture is
that the above question has an affirmative answer and  we refer to \cite{Ge-Sc19a, Ge-Zh17b, Zh18a, Zh18b} and to \cite[Theorem 5.3]{Sc11b} for recent progress. Clearly, all such studies require a detailed understanding of sets of lengths in terms of the group invariants $(n_1, \ldots, n_r)$ of $G$. We address one subproblem. For any BF-monoid $H$ and two elements $a, b \in H$ the sumset $\mathsf L (a) + \mathsf L (b)$ is contained in $\mathsf L (ab)$ but, in general, we do not have equality. This is the reason why, in general, the system $\mathcal L (H)$, considered as a subset of $\mathcal P_{\fin} (\N_0)$, is not a submonoid. On the other hand, the explicit descriptions given in \eqref{system-1} and \eqref{system-2} show that $\mathcal L (C_1), \mathcal L (C_2), \mathcal L (C_3)$, and $\mathcal L (C_2 \oplus C_2)$ are submonoids of $\mathcal P_{\fin} (\N_0)$. There is a characterization of all finite abelian groups $G$ for which $\mathcal L (G)$ is a submonoid, and in the following result we show that all of them are finitely generated.

\begin{theorem} \label{6.2}
Let $G$ be a finite abelian group. Then the following statements are equivalent{\rm \,:}
\begin{enumerate}
\item[(a)]   $\mathcal L (G)$ is a submonoid of $\mathcal P_{\fin} (\N_0)$.

\item[(b)]  All sets of lengths in $\mathcal L (G)$ are arithmetical progressions.

\item[(c)] $G$ is cyclic of order $|G| \le 4$ or isomorphic to a subgroup of  $C_2^3$ or isomorphic to a subgroup of  $C_3^2$.
\end{enumerate}
If these statements hold, then  $\mathcal L (G)$ is a finitely generated submonoid of $\mathcal P_{\fin} (\N_0)$. More precisely, we have
\begin{enumerate}
\item $\mathcal L (C_1) = \mathcal L (C_2) \cong (\N_0, +)$ and $\{1\}$ is the unique  prime element of $\mathcal L (C_1) = \mathcal L (C_2)$.

\item $\mathcal L (C_3) = \mathcal L (C_2 \oplus C_2) \cong (\N_0^2, +)$ and $\{1\}, [2,3]$ are the two prime elements of $\mathcal L (C_3) = \mathcal L (C_2 \oplus C_2)$.

\item $\mathcal L (C_4)$  is a non-cancellative non-transfer Krull monoid with $\mathcal A(\mathcal L(C_4))=\big\{ \{1\}, [2,3], [3,5], \{2,4\} \big\}$.

\item $\mathcal L (C_2^3)$ is a non-cancellative  non-transfer Krull monoid containing $\mathcal L (C_4)$ and with
		$ \mathcal A(\mathcal L(C_2^3))=\{ \{1\}, [2,3], [3,5],[3,6], [4,8], \{2,4\}\}$.

\item $\mathcal L (C_3^2)$ is a cancellative non-transfer Krull monoid with
		$\mathcal A(\mathcal L(C_3^2))=\{ \{1\}, [2,3],[2,4],[2,5],[3,7] \}$.
\end{enumerate}
\end{theorem}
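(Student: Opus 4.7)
The plan is to prove $(c) \Rightarrow (a) \wedge (b)$ by a case-by-case analysis of the seven listed groups, and to establish the contrapositives $\neg (c) \Rightarrow \neg (a)$ and $\neg (c) \Rightarrow \neg (b)$ by exhibiting witnesses in a finite family of minimal excluded groups. The crucial reduction for the contrapositive of (b) is the monotonicity $\mathcal L (H) \subset \mathcal L (G)$ whenever $H$ is a subgroup of $G$, which follows from the fact that $\mathcal B (H) \hookrightarrow \mathcal B (G)$ is a saturated divisor-closed embedding; so any non-AP element of $\mathcal L (H)$ is automatically a non-AP element of $\mathcal L (G)$. Because the groups in (c) are precisely all subgroups of $C_4$, $C_2^3$, or $C_3^2$, it suffices to produce witnesses in the minimal excluded groups, namely $C_p$ for each prime $p \ge 5$, $C_2 \oplus C_4$, $C_2^4$, and $C_3^3$.

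For $(c) \Rightarrow (a) \wedge (b)$ I would handle the seven groups in turn. When $G \in \{C_1, C_2\}$ the monoid $\mathcal B (G)$ is factorial, so $\mathcal L (G) = \{\{k\} \colon k \in \N_0\}$ is a submonoid of $\mathcal P_{\fin} (\N_0)$ isomorphic to $(\N_0, +)$ with unique prime $\{1\}$. When $G \in \{C_3, C_2 \oplus C_2\}$, equation \eqref{system-2} realizes $\mathcal L (G) = \{[y + 2k, y + 3k] \colon y, k \in \N_0\}$; the sumset of two such intervals is again of this form (with parameters $y_1 + y_2$ and $k_1 + k_2$), simultaneously verifying (a) and (b) and inducing an isomorphism $\mathcal L (G) \cong (\N_0^2, +)$ with primes $\{1\}$ (from $y=1, k=0$) and $[2, 3]$ (from $y=0, k=1$). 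For $G \in \{C_4, C_2^3, C_3^2\}$ with $\mathsf D (G) \in \{4, 5\}$, I would invoke the full explicit case-by-case description of $\mathcal L (G)$ obtained in \cite{Ge-Sc-Zh17b}, read off that every $L \in \mathcal L (G)$ is an AP of difference $1$ or $2$, and verify closure under sumset by a direct finite computation. The atom lists in items (3)--(5) are then the $\subset$-minimal non-singleton sets occurring in these descriptions; for example in $\mathcal L (C_4)$ one verifies that $[2, 3] = \mathsf L (g^2(-g)^2(2g)^2)$, $[3, 5] = \mathsf L (g^4(-g)^4(2g)^2)$, and $\{2, 4\} = \mathsf L (g^4(-g)^4)$ are each indecomposable as sumsets.

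For the contrapositives I would construct, in each minimal excluded group, an element whose set of lengths is not an AP, immediately yielding $\neg (b)$ for that group and propagating to all larger groups by monotonicity; separately, I would exhibit explicit $L_1, L_2 \in \mathcal L (G)$ whose sumset fails to be of the form $\mathsf L (a)$, giving $\neg (a)$. These constructions use refactorizations of products involving secondary atoms beyond $\{g^p, (-g)^p, g(-g)\}$ and are standard (see e.g.\ \cite{Sc16a}). For the ``moreover'' part, the non-cancellativity of $\mathcal L (C_4)$ and $\mathcal L (C_2^3)$ in items (3), (4) is witnessed by the identity $[2, 3] + [3, 5] = [2, 3] + \{3, 5\} = [5, 8]$ in $\mathcal L (C_4)$, where $[3, 5] = \mathsf L (g^4(-g)^4(2g)^2)$ and $\{3, 5\} = \mathsf L (g^5(-g)^5)$ are distinct elements of $\mathcal L (C_4)$ and $[5, 8] = \mathsf L (g^6(-g)^6(2g)^4)$, with an analogous identity in $\mathcal L (C_2^3)$; cancellativity of $\mathcal L (C_3^2)$ follows from the fact that all its atoms are intervals, whence all its elements are intervals and cancellativity reduces to the obvious fact that $[p, q] + [a, b] = [p, q] + [c, d]$ forces $[a, b] = [c, d]$. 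The failure of the transfer-Krull property in (3)--(5) is the subtlest point: by Proposition \ref{5.3} a transfer-Krull structure on $\mathcal L (G)$ would embed its cancellative quotient in a Krull monoid with the same quotient group, and a direct computation of the induced system $\mathcal L (\mathcal L (G))$ from the explicit atoms (which yields non-singleton sets of lengths of very specific shapes) contradicts the corresponding system for any $\mathcal B (G_0)$. The principal technical obstacle is the explicit construction of non-AP sets of lengths in each minimal excluded group and the invariant-based verification of non-transfer-Krullness.
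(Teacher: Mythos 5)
Your treatment of the enumerated claims 1--5 largely parallels the paper's: both rest on the explicit descriptions of $\mathcal L (C_4)$, $\mathcal L (C_2^3)$, $\mathcal L(C_3^2)$, and your non-cancellativity witness $[3,5]+[2,3]=\{3,5\}+[2,3]=[5,8]$ is exactly the paper's relation (written there as $[3,5]+[2,3]=\{1\}+\{2,4\}+[2,3]$), and your cancellativity argument for $\mathcal L(C_3^2)$ via intervals is the paper's. The paper, however, does \emph{not} prove the equivalence of (a), (b), (c) --- it cites \cite[Theorem 1.1]{Ge-Sc19d} --- so your attempt to prove it is extra work, and as sketched it has a concrete error: your list of minimal excluded groups omits $C_6$, $C_8$ and $C_9$. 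Each of these is excluded by (c) yet all of its proper subgroups are included, and none of them contains a copy of $C_p$ ($p\ge 5$), $C_2\oplus C_4$, $C_2^4$ or $C_3^3$; so your monotonicity reduction never reaches them and $\neg(b)$ is not established for, say, $C_8$. A second structural problem: the inclusion $\mathcal L(H)\subset\mathcal L(G)$ propagates the existence of a non-AP set of lengths, but it does \emph{not} propagate the failure of (a) --- a sumset $L_1+L_2$ missing from $\mathcal L(H)$ may well lie in the larger system $\mathcal L(G)$ --- so "separately exhibiting $L_1,L_2$" would have to be done in every excluded group, not just the minimal ones, unless you first prove one of the implications between (a) and (b) directly. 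Finally, no actual non-AP witnesses are produced.

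The more serious gap is the non-transfer-Krull assertions in (3)--(5), which you yourself flag as the subtlest point but for which your proposed method does not work as stated. Comparing the induced system $\mathcal L\bigl(\mathcal L(G)\bigr)$ against "the corresponding system for any $\mathcal B(G_0)$" requires excluding \emph{all} monoids of zero-sum sequences over arbitrary subsets of arbitrary abelian groups; these systems are not classified and are extremely rich (by Kainrath's theorem every finite subset of $\N_{\ge 2}$ occurs as a set of lengths of such a monoid), and the paper's own Example \ref{5.8}.1 shows that a system of sets of lengths cannot in general detect non-transfer-Krullness. The paper's arguments are much more direct and are the missing ingredient: for $\mathcal L(C_4)$ and $\mathcal L(C_2^3)$, a transfer homomorphism $\theta$ into a Krull monoid sends atoms to atoms, so applying $\theta$ to $[3,5]+[2,3]=\{1\}+\{2,4\}+[2,3]$ and cancelling $\theta([2,3])$ in the (cancellative) target forces the atom $\theta([3,5])$ to equal $\theta(\{1\})+\theta(\{2,4\})$, a contradiction; for $\mathcal L(C_3^2)$ one uses the relation $3[3,7]=\{1\}+4[2,5]$ and compares $\mathsf v_g$-valuations in $\mathcal F(G_0)$ to conclude $\theta([2,5])\mid\theta([3,7])$, contradicting that both are atoms. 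Without arguments of this kind your items (3)--(5) are unproved.
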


\begin{proof}
The equivalence of (a), (b), and (c) is proved in  \cite[Theorem 1.1]{Ge-Sc19d}, and we prove the Claims 1. - 5.

Claim 1.    follows immediately from \eqref{system-1}.
To verify Claim 2, we observe that   for $y, k \in \N_0$ we have $y
      + 2k + [0, k] = y \{1\} + k [2,3]$ whence  \eqref{system-2} shows that $\{1\}$ and $[2,3]$ generate $\mathcal L (C_3)$ and clearly both elements are atoms. To verify that they are primes, let $y,y',k,k'\in \N_0$ such that $y\{1\}+k[2,3]=y'\{1\}+k'[2,3]$. This implies that $y=y'$ and $k=k'$ whence $\mathcal L (C_3)$ is factorial and $\{1\}$ and $[2,3]$ are primes.

In order to prove 3. and 4., we use the explicit description of $\mathcal L (C_4)$ and $\mathcal L (C_2^3)$ (\cite[Theorem 7.3.2]{Ge-HK06a}). We have
\begin{itemize}
\item $\mathcal L (C_4) = \bigl\{ y + k+1 + [0,k] \, \colon\, y,
      \,k \in \N_0 \bigr\} \,\cup\,  \bigl\{ y + 2k + \{2\nu \colon \nu \in [0,k]\} \, \colon
      \, y,\, k \in \N_0 \bigr\} $.

\item $\mathcal L (C_2^3)  =  \bigl\{ y + (k+1) + [0,k] \,
      \colon\, y
      \in \N_0, \ k \in [0,2] \bigr\}$ \newline
      $\quad \text{\, } \ \qquad$ \quad $\cup \ \bigl\{ y + k + [0,k] \, \colon\, y \in \N_0, \ k \ge 3 \bigr\}
      \cup \bigl\{ y + 2k
      + \{2\nu \colon \nu \in [0,k]\} \, \colon\, y ,\, k \in \N_0 \bigr\}$.
\end{itemize}
These descriptions show that $\mathcal L (C_4) \subset \mathcal L (C_2^3)$.
Let $y, k, t\in \N_0$. Then $y+2k+\{2\nu \colon \nu \in [0,k]\}=y\{1\}+k\{2,4\}$.
	If $k=2t+1$ is odd, then $y+k+1+[0,k]=y\{1\}+[2,3]+t\{2,4\}$.
	If $k=2t+2$ is even, then  $y+k+1+[0,k]=y\{1\}+[3,5]+t\{2,4\}$.
	If $k=0$, then $y+k+1+[0,k]=\{y+1\}=(y+1)\{1\}$. Thus $\mathcal L(C_4)$ is generated by $\{1\}, [2,3],[3,5], \{2,4\}$ and all these elements are atoms. Since $[3,5]+[2,3]=\{1\}+\{2,4\}+[2,3]=[5,8]$, we obtain that $\mathcal L(C_4)$ is not cancellative.
	Since $\{y+k+[0,k]\mid y\in \N_0, k\ge 3\}$ is generated by $\{1\}, [3,6],[4,8], \{2,4\}$, we obtain $\mathcal L(C_2^3)$ is generated by $\{1\}, [2,3], [3,5], [3,6], [4,8],\{2,4\}$. It is easy to see that all these elements are atoms and as before we infer that $\mathcal L (C_2^3)$ is not cancellative.

Assume to the contrary that there is a transfer homomorphism  $\theta\colon\mathcal L(C_4)\rightarrow H$, where $H$ is a Krull monoid. Then $\theta([3,5]), \theta(\{1\}), \theta(\{2,4\})$ are atoms, $\theta ([5,8])=\theta([3,5])+\theta([2,3])=\theta(\{1\})+\theta(\{2,4\})+\theta([2,3])$ whence   $\theta([3,5])= \theta(\{1\})+\theta(\{2,4\})$, a contradiction. The same argument shows that $\mathcal L(C_2^3)$ is not transfer Krull.

It remains to prove Claim 5.  By \cite[Proposition 3.12]{Ge-Sc16b}, we have
\begin{align*}
		&\mathcal L (C_3^2) = \big\{ \{ 1 \}  \big\} \cup \big\{ [2k, \nu] \mid k \in \N_0 , \nu \in [2k, 5k] \big\} \cup \big\{ [2k+1, \nu] \mid k \in \N, \nu \in [2k+1, 5k+2] \big\}\\
		&=\{0\}\cup \{y+2k+[0,3k], y+2k+1+[0,3k+1], y+2k+2+[0, 3k+2]\mid y,k\in \N_0 \text{ with }y+k\neq 0\}.
		\end{align*}
Let $y,k\in \N_0$. Then $y+2k+[0,3k]=y\{1\}+k[2,5]$, $y+2k+2+[0,3k+2]=y\{1\}+[2,4]+k[2,5]$, and if $k\ge 1$, then $y+2k+1+[0,3k+1]=y\{1\}+[3,7]+(k-1)[2,5]$. Thus $\mathcal L(C_3^2)$ is generated by $\{1\}, [2,3], [2,4], [2,5], [3,7]$ and all these elements are atoms. To verify cancellativity, we use that  all sets of lengths are intervals. Let $a,a',b,b',c,c'\in \N_0$ such that $[a,a'],[b,b'], [c,c']\in \mathcal L(C_3^2)$ and $[a,a']+[c,c']=[b,b']+[c,c']$. Then $a+c=b+c$ and $a'+c'=b'+c'$ which imply that $a=b$ and $a'=b'$. Therefore $[a,a']=[b,b']$ whence $\mathcal L(C_3^2)$ is cancellative.
	
	 Assume to the contrary that there is a transfer homomorphism $\theta\colon \mathcal L(C_3^2)\rightarrow \mathcal B(G_0)$, where $G_0$ is a subset of any abelian group. Since $3[3,7]=\{1\}+4[2,5]$, we have that $\supp(\theta([2,5]))\subset \supp(\theta([3,7]))$ and $\mathsf v_g(\theta([3,7]))\ge \frac{4}{3}\mathsf v_g(\theta([2,5]))>\mathsf v_g(\theta([2,5]))$ for all $g\in \supp(\theta([2,5]))$. Therefore $\theta([2,5])\mid_{\mathcal B(G_0)} \theta([3,7])$, a contradiction as $\theta([3,7])$ and $\theta([2,5])$ are distinct atoms.
\end{proof}

\section{Weakly Krull monoids} \label{7}

In this section we study  weakly Krull monoids and we start with primary monoids. Primary monoids are weakly Krull and  localizations of weakly Krull monoids at minimal nonzero prime ideals are primary.

A monoid $H$ is {\it primary} if it is cancellative with $H \ne H^{\times}$ and for every $a, b \in H \setminus H^{\times}$ there is $n \in \N$ such that $b^n \in aH$. The multiplicative monoid $R^{\bullet}$ of a domain $R$ is primary if and only if $R$ is one-dimensional and local (\cite[Proposition 2.10.7]{Ge-HK06a}). Additive submonoids of $(\Q_{\ge 0}, +)$, called {\it Puiseux monoids}, have found a well-deserved attention in recent literature and  are primary (provided that they are different from $\{0\})$.
Since primary monoids need not be atomic, we restrict to a class of primary monoids (called strongly primary) which are BF-monoids. A monoid $H$ is {\it strongly primary} if it is cancellative with $H \ne H^{\times}$ and for every $a \in H \setminus H^{\times}$ there is $n \in \N$ such that $(H \setminus H^{\times})^n \subset aH$. We denote by $\mathcal M (a)$ the smallest $n \in \N$ having this property. Every primary Mori monoid is strongly primary. Thus numerical monoids are strongly primary and the multiplicative monoids $R^{\bullet}$ of one-dimensional local Mori domains $R$ are strongly primary. An additive submonoid $H \subset (\N_0^s, +)$, with $s \in \N$, is a BF-monoid and it is primary if and only if $H = (H \cap \N^s) \cup \{\boldsymbol 0\}$.

Our first lemma unveils that primary monoids and Krull monoids are very different, both from the algebraic as well as from the arithmetic point of view.

\begin{lemma} \label{7.1}
Let $H$ be a  primary monoid.
\begin{enumerate}
\item $H$ is a Krull monoid if and only if $H = H^{\times} \times H_0$ with $H_0 \cong (\N_0, +)$.

\item If $H$ is strongly primary, then $H$ is a \BF-monoid.

\item Let $H$ be strongly primary. If $H$ is not half-factorial, then  there is a $\beta \in \Q_{> 1}$ such that $\rho (L) \ge \beta$ for all $L \in \mathcal L (H)$ with $\rho (L) \ne 1$. In particular, $H$ is  transfer Krull if and only if it is half-factorial.
\end{enumerate}
\end{lemma}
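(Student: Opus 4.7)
For statement (1), the implication ``$H \cong H^\times \times \N_0 \Rightarrow H$ Krull'' is immediate, since $H_{\red} \cong \N_0$ is free abelian, so $H$ is factorial and hence Krull. For the converse I would pass to $H_{\red}$, fix a divisor theory $H_{\red} \hookrightarrow F = \mathcal F(P)$, and show $|P| = 1$. Suppose to the contrary $p_1, p_2 \in P$ are distinct. The gcd property of the divisor theory, applied to $\alpha = p_1$, yields $h_1, \ldots, h_m \in H_{\red}$ with $\gcd(h_1, \ldots, h_m) = p_1$ in $F$; in particular for some index $j$ we have $\mathsf v_{p_2}(h_j) = 0$ and $\mathsf v_{p_1}(h_j) \ge 1$. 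Set $a = h_j$, and produce symmetrically some $b \in H_{\red}$ with $\mathsf v_{p_1}(b) = 0$ and $\mathsf v_{p_2}(b) \ge 1$. Primary gives $b^n \in a H_{\red}$ for some $n$, hence $a \mid b^n$ in $F$, forcing $1 \le \mathsf v_{p_1}(a) \le n \mathsf v_{p_1}(b) = 0$, a contradiction. Since $H \ne H^\times$ excludes $|P| = 0$, we conclude $|P| = 1$, and the gcd property applied to the unique prime then forces $H_{\red} = \N_0$. For statement (2), fix $a \in H \setminus H^\times$ and let $n = \mathcal M(a)$. A factorization $a = w_1 \cdots w_k$ into non-units with $k > n$ would give $w_1 \cdots w_n \in aH$, say $w_1 \cdots w_n = ac$; substituting into $a = w_1 \cdots w_k$ and cancelling $a$ yields $c \, w_{n+1} \cdots w_k = 1_H$, contradicting that the $w_i$ are non-units. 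Thus all factorization lengths of $a$ into non-units are bounded by $n$, and taking a maximal such factorization (in which every factor must be an atom) yields atomicity; hence $H$ is a BF-monoid.

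For the main assertion of statement (3), the task is to produce $\beta \in \Q_{>1}$ with $\rho(L) \ge \beta$ whenever $\rho(L) > 1$. My plan is to fix, using non-half-factoriality, a ``primitive'' nontrivial relation $u_1 \cdots u_{k_0} = v_1 \cdots v_{l_0}$ with $u_i, v_j \in \mathcal A(H)$, $k_0 < l_0$, and disjoint multisets of associate classes, and then to argue that every length pair $k < l$ appearing in some $\mathsf L(a)$ satisfies $l/k$ bounded below by a constant tied to this fixed relation. The approach is to take an arbitrary relation $u_1 \cdots u_k = v_1 \cdots v_l$ with $k < l$ and apply the strongly primary property to an atom $u_1$ of the short factorization: by definition of $\mathcal M(u_1)$, the subproduct $v_1 \cdots v_{\mathcal M(u_1)}$ lies in $u_1 H$, so cancelling $u_1$ produces a shorter non-trivial relation whose length ratio is controlled by the original. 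Iterating such peel-off steps and combining with Part (2)'s bound $\max\mathsf L(a) \le \mathcal M(a)$ should force any length pair with $l/k$ close to $1$ to collapse to a trivial relation via cancellativity. The main obstacle is precisely this reduction: a priori $k$ may be unboundedly large while $l - k$ stays bounded, and carrying out the induction rigorously will likely require an auxiliary Dickson-type finiteness argument on the set of ``minimal'' such relations, or a careful control of the $\mathcal M$-values of the atoms actually appearing.

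The ``in particular'' part does not need the $\beta$-bound and follows cleanly from statement (1). One direction is Example \ref{5.4}.2: half-factorial monoids are transfer Krull. For the converse, suppose $\theta \colon H \to B$ is a transfer homomorphism to a Krull monoid $B$. I first verify that $B$ inherits the primary property. Given non-units $A, B_0 \in B$, condition {\bf (T1)} together with $\theta^{-1}(B^\times) = H^\times$ produces non-units $a, b \in H$ with $\theta(a) \in AB^\times$ and $\theta(b) \in B_0 B^\times$; primary in $H$ yields $b^n = ah$ for some $n$ and some $h \in H$, and applying $\theta$ and absorbing the $B^\times$-units gives $B_0^n \in AB$. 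Hence $B$ is Krull and primary, so statement (1) applied to $B$ yields $B_{\red} \cong \N_0$; thus $B$ is factorial, so $\mathcal L(B) = \{\{k\} \colon k \in \N_0\}$. By Proposition \ref{4.5}, $\mathcal L(H) = \mathcal L(B)$, and therefore $H$ is half-factorial.
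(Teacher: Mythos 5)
Your arguments for parts (1) and (2) are correct and essentially the paper's own: for (1) the paper likewise deduces $|P|=1$ from the fact that primariness forces all non-units to have full support while the divisor theory forces some element to omit any given prime, and for (2) the cancellation argument bounding lengths by $\mathcal M(a)$ is identical. (Minor point on (1): after concluding $H_{\red}\cong(\N_0,+)$ you should still invoke Theorem \ref{4.3}.2 to get the asserted splitting $H=H^{\times}\times H_0$; the paper does this explicitly.)

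The one genuine gap is the main assertion of (3), the existence of $\beta\in\Q_{>1}$ bounding $\rho(L)$ away from $1$. What you offer there is a plan, not a proof, and you yourself identify the unresolved obstacle: peeling off atoms of the short factorization via $\mathcal M(u_1)$ does not obviously control the ratio $l/k$ when $k$ is large and $l-k$ is small, and the ``Dickson-type finiteness argument'' you gesture at is not available for a general (not finitely generated) strongly primary monoid. This step cannot be waved through. For comparison, the paper does not prove it in-line either; it cites \cite[Theorem 5.5]{Ge-Sc-Zh17b}, where the actual argument uses the quantities $\mathcal M(\cdot)$ more globally (roughly, one shows $\min\mathsf L(a)\ge |a|_{\text{atoms}}/\mathcal M(u)$-type bounds relative to a single fixed non-absolutely-irreducible atom). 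So your proposal is incomplete here rather than wrong, but as written the first claim of (3) is unproved.

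Your treatment of the ``in particular'' clause is correct and is a genuinely different route from the paper's. The paper deduces it from the $\beta$-gap in $\mathcal R(H)$ combined with Theorem \ref{5.5} (which says the set of elasticities of a transfer Krull monoid is a full interval of rationals, so a gap above $1$ is impossible unless $H$ is half-factorial). You instead show directly that any transfer homomorphism $\theta\colon H\to B$ onto a Krull monoid forces $B$ to be primary (a short check using {\bf (T1)}), hence $B_{\red}\cong(\N_0,+)$ by part (1), hence $B$ is factorial and $\mathcal L(H)=\mathcal L(B)$ consists of singletons by Proposition \ref{4.5}. This is cleaner: it is independent of the unproved $\beta$-bound, avoids the finite-type hypothesis of Theorem \ref{5.5}, and reuses part (1) in an attractive way. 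The trade-off is that the paper's route gives the stronger quantitative statement about $\mathcal R(H)$ along the way, whereas yours only yields the qualitative dichotomy.
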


\begin{proof}
1. Suppose that $H$ is a Krull monoid. Then there is a free abelian monoid $\mathcal F (P)$ such that the inclusion $H_{\red} \hookrightarrow \mathcal F (P)$ is a divisor theory. Since $H$ is primary, it follows that $\supp (aH) = \supp (bH)$ for all $a, b \in H \setminus H^{\times}$. Since every $p \in P$ is a greatest common divisor of elements from $H_{\red}$, it follows that $|P|=1$. Since $H_{\red}$ is completely integrally closed, it is equal to $\mathcal F (P)$. Thus Theorem \ref{4.3}.2 implies that $H$ has the asserted form, and the converse implication is obvious.

2. We assert that every $a \in H \setminus H^{\times}$ has a factorization into atoms and that $\sup \mathsf L (a) \le \mathcal M (a)$. Let $a \in H \setminus H^{\times}$ be given. If $a$ is not an atom, then there are $a_1, a_2 \in H \setminus H^{\times}$ such that $a=a_1a_2$. Proceeding by induction, we obtain a product decomposition of $a$ into $n$ non-units, say $a = a_1 \cdot \ldots \cdot a_n$. If $n > \mathcal M (a)$, then $a_1 \cdot \ldots \cdot a_{n-1} \subset (H \setminus H^{\times})^{\mathcal M (a)} \subset aH$ and hence $a$ divides a proper subproduct of $a_1 \cdot \ldots \cdot a_n = a$, a contradiction. Thus $a$ has a product decomposition into atoms and the number of factors is bounded by $\mathcal M (a)$.

3. The first claim follows from \cite[Theorem 5.5]{Ge-Sc-Zh17b}. Thus Theorem \ref{5.5} and Example \ref{5.4}.2 imply the second statement.
\end{proof}

The arithmetic of various classes of strongly primary monoids, especially of numerical monoids, has found wide attention in the literature. We mention some striking recent results. O'Neill and Pelayo showed that for every finite nonempty subset $C \subset \N_{\ge 2}$ there is a numerical monoid $H$ such that $\Ca (H) = C$ (\cite{ON-Pe18a}). It is an open problem whether there is a numerical monoid $H$ with prescribed sets of distances (see \cite{Co-Ka17a}).
F. Gotti proved that there is a primary BF-submonoid $H$ of $(\Q_{\ge 0}), +)$  such that every finite nonempty set $L \subset \N_{\ge 2}$ occurs as a set of lengths of $H$ (see \cite[Theorem 3.6]{Go19a}, and compare with Remark \ref{5.7}). Such an extreme phenomenon cannot happen if we impose a further finiteness condition, namely local tameness. Let $H$ be a cancellative atomic monoid. For an atom $u \in \mathcal A (H_{\red})$, the {\it local tame degree} $\mathsf t (H,u)$ is the smallest $N \in \N_0 \cup \{\infty\}$ with the following property:
\begin{itemize}
\item[] If $a \in H$ with $\mathsf Z (a) \cap u \mathsf Z (H) \ne \emptyset$, and $z \in \mathsf Z (a)$, then there exists $z' \in \mathsf Z (a) \cap u \mathsf Z (H)$ such that $\mathsf d (z,z') \le N$.
\end{itemize}
The monoid $H$ is {\it locally tame} if $\mathsf t (H, u) < \infty$ for all $u \in \mathcal A (H_{\red})$. If $H$ is finitely generated or a Krull monoid with finite class group, then $H$ is locally tame. Strongly primary monoids with nonempty conductor and all strongly primary domains are locally tame (\cite{Ge-Ro19b}).

Our next result should be compared with Theorem \ref{3.1}, which gathered arithmetical finiteness properties of finitely generated monoids. The main difference is that unions of sets of lengths can be infinite. To give a simple example for this phenomenon, consider the additive monoid $H = \N^2 \cup \{(0,0)\} \subset (\N_0^2, +)$. Then $H$ is a locally tame strongly primary monoid, that is not finitely generated and  $\mathcal U_k (H) = \N_{\ge 2}$ for all $k \ge 2$.

\begin{theorem}[\bf Arithmetic of strongly primary monoids] \label{7.2}
Let $H$ be a locally tame strongly primary monoid.

\begin{enumerate}
\item The set of  catenary degrees and the set of distances   are finite.

\item There is $M \in \N_0$ such that, for all $k \in \N$,  the unions $\mathcal U_k (H)$ are  {\rm AAP}s with difference $\min \Delta (H)$ and bound $M$. If $(H \DP \widehat H) \ne \emptyset$, then all sets $\mathcal U_k (H)$ are finite if and only if $\widehat H$ is a valuation monoid.

\item There is $M \in \N_0$ such that every $L \in \mathcal L (H)$ is a finite  {\rm AAP} with difference $\min \Delta (H)$ and bound $M$.
\end{enumerate}
\end{theorem}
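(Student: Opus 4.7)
The overall strategy combines two hypotheses: by Lemma~\ref{7.1}.2, $H$ is a BF-monoid with $\sup \mathsf L(a) \le \mathcal M(a)$ for every $a \in H \setminus H^\times$, and local tameness provides $\mathsf t(H,u) < \infty$ for every atom $u \in \mathcal A(H_{\red})$. My plan is to establish part (1) first, and then to derive parts (3) and (2) from it.

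For part (1), I would bound the catenary degree $\mathsf c(H)$ via a reduction argument. Given $a \in H \setminus H^\times$ and factorizations $z, z' \in \mathsf Z(a)$, pick an atom $u$ appearing in $z$; local tameness provides $z'' \in u \mathsf Z(H) \cap \mathsf Z(a)$ with $\mathsf d(z', z'') \le \mathsf t(H,u)$, and cancelling $u$ reduces the problem to factorizations of $au^{-1} \in H$, whose $\mathcal M$-value is strictly smaller. The strongly primary property is what forces this reduction to terminate with a uniform mesh bound, since only finitely many atom-types are needed to handle elements up to any fixed $\mathcal M$-threshold. This yields $\mathsf c(H) < \infty$; then \eqref{dist-2} gives $\sup \Delta(H) \le \mathsf c(H) - 2$, so $\Delta(H) \subset \N$ and $\Ca(H) \subset [2, \mathsf c(H)]$ are both finite.

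For part (3), finiteness of each $\mathsf L(a)$ is immediate from $\sup \mathsf L(a) \le \mathcal M(a)$. To establish the AAP structure with difference $d = \min \Delta(H)$ and a uniform bound $M$, I would argue that once the position inside $\mathsf L(a)$ is sufficiently far from the endpoints, the finite catenary chains from (1) allow one to ``shift'' any anomalous gap inward and replace it by a gap of size $d$; the bound $M$ is then assembled from $\mathsf c(H)$ and $\max \Delta(H)$, both finite by (1). Part (2)'s AAP statement for $\mathcal U_k(H)$ follows by taking unions of $\mathsf L(a)$ over $a$ with $k \in \mathsf L(a)$ and applying (3). For the ``moreover'' clause of (2): if $\mathfrak f = (H \DP \widehat H) \ne \emptyset$ and $\widehat H$ is a valuation monoid with valuation $v$, then $v$ restricted to $H$ yields a linear bound on $\max \mathsf L(a)$ in terms of $v(a)$ (with additive correction coming from $\mathfrak f$), forcing each $\mathcal U_k(H)$ to be finite; conversely, if $\widehat H$ admits two independent normalized valuations, one uses them as independent ``directions'' to construct a sequence $(a_n)$ with $k \in \mathsf L(a_n)$ and $\max \mathsf L(a_n) \to \infty$.

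The main obstacle is the equivalence in part (2). The forward direction requires pushing valuation bounds on $\widehat H$ through the conductor $\mathfrak f$ back to $H$, which is delicate because atoms of $H$ need not coincide with atoms of $\widehat H$; one must control how $v$ behaves on the atoms of $H$ uniformly. The reverse direction needs a genuine construction extracting non-valuation structure from $\widehat H$ and importing it into $H$ via an atom of $H$ that detects both valuations simultaneously.
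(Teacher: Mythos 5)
The paper proves this theorem by citation only (parts (1) and (3) to \cite[Theorems 3.1.1 and 4.3.6]{Ge-HK06a}, part (2) to \cite{Ge-Go-Tr20}), so your sketch has to stand on its own, and it contains a genuine gap. The decisive problem is part (3): you propose to derive the AAP structure of $\mathsf L(a)$ from the finiteness of the catenary degree by ``shifting anomalous gaps inward'' along catenary chains. Finiteness of $\mathsf c(H)$ cannot yield this conclusion: finitely generated Krull monoids have finite catenary degree (Theorem \ref{3.1}), yet by Theorem \ref{5.6}.3 their sets of lengths realize arbitrary AAMPs with nontrivial periods $\mathcal D \subsetneq \{0, d, \dots\}$ and difference $d > \min \Delta(H)$; such sets are not AAPs with difference $\min\Delta(H)$ and uniformly bounded exceptional parts. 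So your mechanism would prove a false statement. What forces the stronger AAP conclusion here is the strongly primary property itself: if a finite pattern of lengths occurs in some $\mathsf L(b)$, then the set of elements realizing (a translate of) that pattern is an $s$-ideal containing $bH \supset (H\setminus H^\times)^{\mathcal M(b)}$, hence contains \emph{every} element with sufficiently large minimal length. This ``every pattern propagates to all large elements'' argument (the pattern-ideal argument behind \cite[Theorem 4.3.6]{Ge-HK06a}) is the missing idea, and it is not a consequence of (1).

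Two further points. In part (1) your induction measure is wrong: $\mathcal M(au^{-1}) \le \mathcal M(a)$ but there is no strict decrease, and your appeal to ``finitely many atom-types'' is unfounded ($\mathcal A(H)$ may be infinite with $\mathcal M$ unbounded on it). The working version of your argument fixes a \emph{single} atom $u$, uses strong primarity to see that $u \mid a$ whenever $a$ admits a factorization of length $\ge \mathcal M(u)$, applies local tameness to pull both factorizations into $u\,\mathsf Z(H)$, and inducts on $\max \mathsf L(a)$ (finite by Lemma \ref{7.1}.2), with the base case $\max\{|z|,|z'|\} < \mathcal M(u)$ handled by the trivial bound $\mathsf d(z,z') \le \max\{|z|,|z'|\}$; this gives $\mathsf c(H) \le \max\{\mathcal M(u), \mathsf t(H,u)\}$, and then \eqref{dist-2} finishes as you say. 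In part (2), the union of the AAPs $\mathsf L(a)$ over all $a$ with $k \in \mathsf L(a)$ is not automatically an AAP: the exceptional parts $L', L''$ sit near the varying endpoints $\min\mathsf L(a)$, $\max\mathsf L(a)$, not near the endpoints of $\mathcal U_k(H)$, and the union may be infinite. The standard route (used by the paper itself in the proof of Theorem \ref{7.4}.2) is to control the growth $\rho_{k+1}(H)-\rho_k(H)$ and invoke \cite[Theorem 4.2]{Ga-Ge09b}; establishing that growth bound, and the equivalence with $\widehat H$ being a valuation monoid, is the actual content of \cite{Ge-Go-Tr20} and is left open in your sketch, as you acknowledge.
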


\begin{proof}
1. and 3. follow from \cite[Theorems 3.1.1 and 4.3.6]{Ge-HK06a} and for 2. see \cite{Ge-Go-Tr20}.
\end{proof}

Now we consider the global case. Weakly Krull domains were introduced by Anderson, Anderson, Mott, and Zafrullah (\cite{An-Mo-Za92, An-An-Za92b}). A pure multiplicative description and a divisor theoretic characterization are due to Halter-Koch (\cite{HK95a, HK98}). A monoid $H$ is {\it weakly Krull} if it is cancellative,
\[
H = \bigcap_{\mathfrak p \in \mathfrak X (H)} H_{\mathfrak p} \,, \quad \text{and} \quad \{\mathfrak p \in \mathfrak X (H) \colon a \in \mathfrak p \} \ \text{is finite for all} \ a \in H \,.
\]
Note that $H_{\mathfrak p}$ is a primary monoid for all $\mathfrak p \in \mathfrak X (H)$ and a weakly Krull monoid is Krull if and only if $H_{\mathfrak p}$ is a discrete valuation monoid for all $\mathfrak p \in \mathfrak X (H)$. A domain $R$ is weakly Krull if and only if $R^{\bullet}$ is a weakly Krull monoid. The arithmetic of weakly Krull monoids is studied via transfer homomorphisms to $T$-block monoids (see \cite[Sections 3.4 and 4.5]{Ge-HK06a} for $T$-block monoids and the structure of sets of lengths and \cite{Zh19b, Tr19a} for the structure of their unions). We cannot develop these concepts here whence we restrict to the monoid of their divisorial ideals  whose arithmetic can be  deduced easily from the local case.

For the remainder of this section we study the monoid $\mathcal I_v (H)$ of divisorial ideals of  weakly Krull Mori monoids $H$   and the submonoid $\mathcal I_v^* (H)$ of $v$-invertible divisorial ideals. Clearly, $\mathcal I_v^* (H) \subset \mathcal I_v (H)$ is a divisor-closed submonoid. Every one-dimensional Noetherian domain $R$ (in particular, every Cohen-Kaplansky domain) is a weakly Krull Mori domain and in that case we have $\mathcal I_v^* (R) = \mathcal I^* (R)$. A domain $R$ is called divisorial  (see \cite{Ba00b}) if each nonzero ideal is divisorial (i.e., $\mathcal I_v (R) = \mathcal I (R)$). Note that one-dimensional Noetherian domains need not be divisorial.

\begin{proposition} \label{7.3}
Let $H$ be a weakly Krull Mori monoid.
\begin{enumerate}
\item $\mathcal I_v^* (H)$ is a Mori monoid and it is transfer Krull if and only if it is half-factorial.

\item If \ $\mathcal I_v (H)$ is transfer Krull, then $\mathcal I_v^* (H)$ is half-factorial.

\item If $R$ is an order in a quadratic number field, then $\mathcal I (R)$ is transfer Krull if and only if it is half-factorial.

\end{enumerate}
\end{proposition}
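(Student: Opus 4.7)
The plan is to establish Part 1 by a local-global reduction to the primary case, derive Part 2 as an immediate corollary, and attack Part 3 by combining Part 2 with specific features of quadratic orders.

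For Part 1, the Mori property of $\mathcal I_v^*(H)$ has already been recorded in Section \ref{2} (right after divisorial ideals were introduced), so only the equivalence ``transfer Krull $\Leftrightarrow$ half-factorial'' needs work. The ``$\Leftarrow$'' direction is Example \ref{5.4}.2. For ``$\Rightarrow$'', I will use the unique primary decomposition of divisorial ideals in a weakly Krull Mori monoid: every $I \in \mathcal I_v^*(H)$ has a unique $v$-factorization $I = I_{\mathfrak p_1} \cdot_v \ldots \cdot_v I_{\mathfrak p_n}$ with $I_{\mathfrak p_i}$ a $\mathfrak p_i$-primary $v$-invertible divisorial ideal and $\mathfrak p_1, \ldots, \mathfrak p_n \in \mathfrak X(H)$ pairwise distinct. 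Consequently, for each $\mathfrak p \in \mathfrak X(H)$, the submonoid $S_{\mathfrak p} \subset \mathcal I_v^*(H)$ of $\mathfrak p$-primary $v$-invertible divisorial ideals is divisor-closed, and the localization map $I \mapsto IH_{\mathfrak p}$ induces an isomorphism $S_{\mathfrak p} \cong \mathcal I_v^*(H_{\mathfrak p})$. Since $H_{\mathfrak p}$ is primary Mori, it is strongly primary, and a short verification shows that $\mathcal I_v^*(H_{\mathfrak p})$ is likewise cancellative, primary, and Mori, hence strongly primary as well. Assuming $\mathcal I_v^*(H)$ is transfer Krull, Lemma \ref{5.2}.2 passes this property to each $S_{\mathfrak p} \cong \mathcal I_v^*(H_{\mathfrak p})$, and Lemma \ref{7.1}.3 then forces each of them to be half-factorial. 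Uniqueness of the primary decomposition yields $\mathsf L_{\mathcal I_v^*(H)}(I) = \mathsf L_{S_{\mathfrak p_1}}(I_{\mathfrak p_1}) + \ldots + \mathsf L_{S_{\mathfrak p_n}}(I_{\mathfrak p_n})$, and a sumset of singletons is a singleton, so $\mathcal I_v^*(H)$ is half-factorial.

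Part 2 will be immediate: $\mathcal I_v^*(H) \subset \mathcal I_v(H)$ is divisor-closed (as noted at the start of Section \ref{7}), so if $\mathcal I_v(H)$ is transfer Krull, then Lemma \ref{5.2}.2 transfers this to $\mathcal I_v^*(H)$, and Part 1 gives half-factoriality. For Part 3, an order in a quadratic number field is Gorenstein, so $\mathcal I(R) = \mathcal I_v(R)$ and $\mathcal I^*(R) = \mathcal I_v^*(R^{\bullet})$. The ``$\Leftarrow$'' direction is again Example \ref{5.4}.2. For the converse, assume $\mathcal I(R)$ is transfer Krull; Part 2 already yields that $\mathcal I^*(R)$ is half-factorial. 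Using the primary decomposition $\mathcal I(R) \cong \coprod_{\mathfrak p \in \max(R)} \mathcal I(R_{\mathfrak p})$ valid in any one-dimensional Noetherian domain, together with Lemma \ref{5.2}.2, each local piece $\mathcal I(R_{\mathfrak p})$ inherits the transfer Krull property. For $\mathfrak p$ not containing the conductor $\mathfrak f = (R \DP \overline R)$, $R_{\mathfrak p}$ is a discrete valuation domain and $\mathcal I(R_{\mathfrak p}) \cong (\N_0,+)$ is trivially half-factorial. For the finitely many $\mathfrak p \supset \mathfrak f$, I will use the explicit description of the ideal monoid of the local quadratic order $R_{\mathfrak p}$, whose integral closure is a semilocal principal ideal domain with at most two maximal ideals, and combine it with the already-established half-factoriality of $\mathcal I^*(R_{\mathfrak p}) \cong (R_{\mathfrak p}^{\bullet})_{\red}$ to conclude that $\mathcal I(R_{\mathfrak p})$ itself is half-factorial.

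The main obstacle will lie in this last step of Part 3: the full ideal monoid $\mathcal I(R_{\mathfrak p})$ is only unit-cancellative and not cancellative in general, so Lemma \ref{7.1}.3 is unavailable as a black box, and the argument must engage directly with the concrete shape of non-invertible ideals at a conductor prime of a quadratic order. The remaining steps, namely the local-global reduction via primary decomposition and the transfer of the transfer Krull property to divisor-closed submonoids via Lemma \ref{5.2}.2, should be routine once the correct submonoids have been identified.
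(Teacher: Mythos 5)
Your treatment of Parts 1 and 2 is correct and essentially the paper's own argument: the paper also reduces via the decomposition $\mathcal I_v^* (H) \cong \coprod_{\mathfrak p \in \mathfrak X (H)} (H_{\mathfrak p})_{\red}$ (quoted from \cite[Proposition 5.3]{Ge-Ka-Re15a}), applies Lemma \ref{5.2}.2 to the divisor-closed local components, and invokes Lemma \ref{7.1}.3; your identification of the local pieces as $\mathcal I_v^*(H_{\mathfrak p})$ rather than $(H_{\mathfrak p})_{\red}$ is an equivalent formulation, since $v$-invertible divisorial ideals of a primary Mori monoid are principal. Part 2 is verbatim the paper's argument.

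Part 3, however, has a genuine gap exactly where you flag "the main obstacle." After Part 2 hands you half-factoriality of $\mathcal I^*(R)$, everything hinges on the implication that $\mathcal I^*(R)$ half-factorial forces $\mathcal I(R)$ half-factorial, i.e.\ on controlling sets of lengths of products of \emph{non-invertible} ideals at the conductor primes. This is not a routine verification: it is precisely \cite[Theorem 1.1]{Br-Ge-Re20}, a standalone result whose proof requires a case-by-case analysis of the non-invertible ideals of a local quadratic order (classified via the conductor and the structure of $\overline{R_{\mathfrak p}}$, which may have one or two maximal ideals), and your proposal only announces the intention to "combine" the explicit description with half-factoriality of $\mathcal I^*(R_{\mathfrak p})$ without exhibiting the combination. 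Note also that your intermediate step of transferring the transfer Krull property to each $\mathcal I(R_{\mathfrak p})$ via Lemma \ref{5.2}.2 buys you little, since (as you observe) $\mathcal I(R_{\mathfrak p})$ is only unit-cancellative and Lemma \ref{7.1}.3 does not apply; the paper avoids this entirely by never localizing $\mathcal I(R)$ and instead quoting the equivalence of half-factoriality of $\mathcal I(R)$ and $\mathcal I^*(R)$ as a black box. As written, your Part 3 is a plausible strategy for reproving that cited theorem, not a proof of it.
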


\begin{proof}
1. By \cite[Proposition 5.3]{Ge-Ka-Re15a}, $\mathcal I_v^* (H)$ is a Mori monoid and
\begin{equation} \label{structure-1}
\mathcal I_v^* (H) \cong \coprod_{\mathfrak p \in \mathfrak X (H)} (H_{\mathfrak p})_{\red} \,.
\end{equation}
Suppose that $\mathcal I_v^* (H)$ is transfer Krull. By Lemma \ref{5.2}, the divisor-closed submonoids $(H_{\mathfrak p})_{\red}$, for all $\mathfrak p \in \mathfrak X (H)$,  are transfer Krull and hence they are half-factorial by Lemma \ref{7.1}.3. This implies that $\mathcal I_v^* (H)$ is half-factorial. Since all half-factorial monoids are transfer Krull, the reverse implication is obvious.

2. Since $\mathcal I_v^* (H)$ is a divisor-closed submonoid of $\mathcal I_v (H)$, this follows from 1.

3. Let $R$ be an order in a quadratic number field. Then $H =R^{\bullet}$ is weakly Krull Mori, $R$ is divisorial whence $\mathcal I_v (H) \cong \mathcal I_v (R) = \mathcal I (R)$, and $\mathcal I (R)$ is half-factorial if and only if $\mathcal I^* (R)$ is half-factorial (\cite[Theorem 1.1]{Br-Ge-Re20}. Thus the claim follows from 2.
\end{proof}

Proposition \ref{7.3}.1 shows that $\mathcal I_v^* (H)$ is transfer Krull only in the trivial case when it is half-factorial. This need not be true for the monoid $H$ itself. Indeed, there are weakly Krull Mori monoids $H$ (including orders in number fields) that are transfer Krull but not half-factorial (\cite[Theorem 5.8]{Ge-Ka-Re15a}).

\begin{theorem} \label{7.4}
Let $H$ be a weakly Krull Mori monoid with $\emptyset \ne \mathfrak f = (H \DP \widehat H) \subsetneq H$. We set $\mathcal P^* = \{\mathfrak p \in \mathfrak X (H) \colon \mathfrak p \supset \mathfrak f \}$ and $\mathcal P = \mathfrak X (H) \setminus \mathcal P^*$.
\begin{enumerate}
\item The set of  catenary degrees and the set of distances of $\mathcal I_v^* (H)$  are finite.

\item If there is $M' \in \N$ such that $\rho_{k+1} (H_{\mathfrak p}) - \rho_{k} (H_{\mathfrak p}) \le M'$ for all $k \in \N_0$ and all $\mathfrak p \in \mathcal P^*$, then there are $M, k^*  \in \N_0$ such that, for all $k \ge k^*$,  the unions $\mathcal U_k \bigl( \mathcal I_v^* (H) \bigr)$ are  {\rm AAP}s with difference $\min \Delta \bigl( \mathcal I_v^* (H) \bigr)$ and bound $M$. The sets $\mathcal U_k \bigl( \mathcal I_v^* (H) \bigr)$ are finite for all $k \in \N$ if and only if the map $v$-$\spec (\widehat H) \to v$-$\spec (H)$, defined by $\mathfrak p \mapsto \mathfrak p \cap H$, is bijective.

\item There is $M \in \N_0$ such that every $L \in \mathcal L \bigl( \mathcal I_v^* (H) \bigr)$ is an {\rm AAMP} with difference $d \in \Delta \bigl( \mathcal I_v^* (H) \bigr)$ and bound $M$.
\end{enumerate}
\end{theorem}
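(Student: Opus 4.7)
The plan is to reduce everything to the primary local factors via the isomorphism~\eqref{structure-1},
$$\mathcal{I}_v^*(H) \;\cong\; \coprod_{\mathfrak{p} \in \mathfrak{X}(H)} (H_{\mathfrak{p}})_{\red}.$$
First, for $\mathfrak{p} \in \mathcal{P}$ we have $\mathfrak{f} \not\subset \mathfrak{p}$; picking $c \in \mathfrak{f} \setminus \mathfrak{p}$ yields $\widehat H_\mathfrak{p} \subset c^{-1} H_\mathfrak{p} = H_\mathfrak{p}$, so $H_\mathfrak{p}$ is a completely integrally closed primary Mori monoid, and Lemma~\ref{7.1}(1) gives $(H_\mathfrak{p})_{\red} \cong (\N_0,+)$. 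Thus $\mathcal{I}_v^*(H) \cong B \times F$, where $B := \coprod_{\mathfrak{p} \in \mathcal{P}^*} (H_\mathfrak{p})_{\red}$ and $F := \mathcal{F}(\mathcal{P})$ is free abelian. The weakly Krull hypothesis forces $\mathcal{P}^*$ to be finite, and each $H_\mathfrak{p}$ with $\mathfrak{p} \in \mathcal{P}^*$ is strongly primary with nonempty conductor, hence locally tame by the Geroldinger--Roitman result cited before Theorem~\ref{7.2}, so Theorem~\ref{7.2} applies to every $(H_\mathfrak{p})_{\red}$.

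In a coproduct of atomic monoids $\mathsf{L}_{H_1 \times H_2}((a_1,a_2)) = \mathsf{L}_{H_1}(a_1) + \mathsf{L}_{H_2}(a_2)$, so iteratively every set of lengths in $\mathcal{I}_v^*(H)$ is a constant shift (from $F$) of a Minkowski sum of finitely many sets of lengths from the $(H_\mathfrak{p})_{\red}$, $\mathfrak{p} \in \mathcal{P}^*$. The catenary degree of a coproduct equals the maximum of the component catenary degrees, hence $\Ca(\mathcal{I}_v^*(H)) = \bigcup_{\mathfrak{p} \in \mathcal{P}^*} \Ca((H_\mathfrak{p})_{\red})$ is finite by Theorem~\ref{7.2}(1). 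For Part 3, Theorem~\ref{7.2}(3) makes each local set of lengths a finite AAP with uniform bound and difference $\min\Delta(H_\mathfrak{p})$; a Minkowski sum of finitely many finite AAPs whose differences lie in a finite set is an AAMP with difference in $\Delta(\mathcal{I}_v^*(H))$ and appropriately enlarged bound. The finiteness of $\Delta(\mathcal{I}_v^*(H))$ asserted in Part 1 is then immediate from these bounded AAMP parameters.

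For Part 2 I would apply the identity $\mathcal{U}_k(H_1 \times H_2) = \bigcup_{i+j=k}\bigl(\mathcal{U}_i(H_1) + \mathcal{U}_j(H_2)\bigr)$ iteratively, writing $\mathcal{U}_k(\mathcal{I}_v^*(H))$ as a union of Minkowski sumsets of the $\mathcal{U}_{i_\mathfrak{p}}(H_\mathfrak{p})$, $\mathfrak{p} \in \mathcal{P}^*$. By Theorem~\ref{7.2}(2) each such $\mathcal{U}_i(H_\mathfrak{p})$ is an AAP with uniform bound~$M_\mathfrak{p}$ and difference $\min\Delta(H_\mathfrak{p})$; the extra hypothesis $\rho_{k+1}(H_\mathfrak{p}) - \rho_k(H_\mathfrak{p}) \le M'$ bounds the gap between consecutive such unions, and for $k \ge k^*$ the Minkowski sumsets overlap sufficiently in their interior that the bounded edge pieces $L', L''$ from each factor can be absorbed into a single AAP with difference $\min\Delta(\mathcal{I}_v^*(H)) = \gcd_{\mathfrak{p} \in \mathcal{P}^*} \min\Delta(H_\mathfrak{p})$. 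The ``finite for all $k$'' statement reduces via the coproduct to finiteness of each $\mathcal{U}_k(H_\mathfrak{p})$, which by Theorem~\ref{7.2}(2) is equivalent to $\widehat{H_\mathfrak{p}}$ being a valuation monoid for every $\mathfrak{p} \in \mathcal{P}^*$; since the $v$-primes of $\widehat H$ lying over $\mathfrak{p}$ correspond to the maximal $v$-ideals of $\widehat{H_\mathfrak{p}}$, this is exactly the bijectivity of $v$-$\spec(\widehat H) \to v$-$\spec(H)$.

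The main obstacle is the combinatorial patching in Part 2: upgrading the AAMP produced by a Minkowski sum of AAPs into a genuine AAP with the prescribed difference $\min\Delta(\mathcal{I}_v^*(H))$, rather than only an AAMP with a possibly coarser period~$\mathcal{D}$. The uniform gap bound $M'$ is precisely what enables this upgrade, by forcing, for sufficiently large $k$, the full arithmetic-progression pattern modulo $\min\Delta(\mathcal{I}_v^*(H))$ to be realized throughout the interior of $\mathcal{U}_k(\mathcal{I}_v^*(H))$.
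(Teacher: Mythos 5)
Your proposal follows essentially the same route as the paper: decompose $\mathcal I_v^* (H) \cong \mathcal F (\mathcal P) \times \prod_{\mathfrak p \in \mathcal P^*} (H_{\mathfrak p})_{\red}$ with $\mathcal P^*$ finite and each $H_{\mathfrak p}$ ($\mathfrak p \in \mathcal P^*$) locally tame and strongly primary, invoke Theorem \ref{7.2} locally, and recombine via the product formulas for sets of lengths and for $\mathcal U_k$. Parts 1 and 3 and the finiteness criterion in Part 2 are handled exactly as in the paper (for Part 3 the ``Minkowski sum of AAPs is an AAMP'' step is the addition theorem \cite[Theorem 4.2.16]{Ge-HK06a}, and the claim that the difference lies in $\Delta\bigl(\mathcal I_v^*(H)\bigr)$ is itself nontrivial, resting on \cite[Theorem 4.5.4]{Ge-HK06a}).

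The one place you leave open --- the ``combinatorial patching'' upgrading the sumsets $\mathcal U_{k-\nu}(H_1)+\mathcal U_{\nu}(H_2)$ to a single AAP with difference $\min \Delta\bigl(\mathcal I_v^*(H)\bigr)$ --- is precisely where the paper does its only real work, and it is closed differently from what you sketch: rather than patching Minkowski sums directly, one first proves that the hypothesis $\rho_{k+1}(H_{\mathfrak p}) - \rho_k(H_{\mathfrak p}) \le M'$ propagates to the finite product, via $\rho_k(H_1 \times H_2) = \max\{\rho_{k-\nu}(H_1)+\rho_{\nu}(H_2) \colon \nu \in [0,k]\}$ and a short case distinction, and then applies the general structural theorem of Gao--Geroldinger \cite[Theorem 4.2]{Ga-Ge09b}, which converts ``$\Delta$ finite plus uniformly bounded gaps $\rho_{k+1}-\rho_k$'' into the asserted AAP structure of the $\mathcal U_k$. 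Your direct overlapping-sumsets argument would still have to reprove that theorem; as written it is an acknowledged assertion rather than a proof. A further small caution: your formula $\min \Delta\bigl(\mathcal I_v^*(H)\bigr) = \gcd_{\mathfrak p} \min \Delta(H_{\mathfrak p})$ is fine when the relevant $\Delta(H_{\mathfrak p})$ are nonempty, but it is not needed once one cites \cite[Theorem 4.2]{Ga-Ge09b}, which already produces the difference $\min \Delta\bigl(\mathcal I_v^*(H)\bigr)$.
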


\begin{proof}
Since the global conductor $(H \DP \widehat H) \ne \emptyset$, the local conductors $(H_{\mathfrak p} \DP \widehat{H_{\mathfrak p}}) \ne \emptyset$ whence $H_{\mathfrak p}$ is a primary Mori monoid (whence strongly primary) with nonempty conductor (whence locally tame) for all $\mathfrak p \in \mathfrak X (H)$. Since $H$ is a Mori monoid, the set $\mathcal P^*$ is finite and $H_{\mathfrak p}$ is a discrete valuation monoid for all $\mathfrak p \in \mathcal P$. Thus, by \eqref{structure-1}, we infer that
\begin{equation} \label{structure-2}
\mathcal I_v^* (H) \cong \mathcal F (\mathcal P) \times \prod_{\mathfrak p \in \mathcal P^*} (H_{\mathfrak p})_{\red} \,.
\end{equation}

1. This follows from Theorem \ref{7.2}.1 and from the structure of $\mathcal I_v^* (H)$ as given in \eqref{structure-2}.

2. We first note that if $H_1$ and $H_2$ are BF-monoids but not groups and $k \in \N_0$, then
\begin{equation} \label{direct-product}
\mathcal U_k( H_1 \times H_2) = \cup_{\nu \in [0,k]} \bigl( \mathcal U_{k-\nu}(H_1) + \mathcal U_{\nu} (H_2) \bigr) \quad \text{and}
\end{equation}
\begin{equation} \label{direct-product-2}
\rho_k (H_1 \times H_2) = \max \{ \rho_{k-\nu} (H_1) + \rho_{\nu} (H_2) \colon \nu \in [0,k] \} \,.
\end{equation}
Let $M' \in \N$ such that $\rho_{k+1} (H_i) - \rho_k (H_i) \le M'$ for all $k \ge k^*$ and all $i \in [1,2]$. We assert that $\rho_{k+1} (H_1 \times H_2) - \rho_k (H_1 \times H_2) \le M'$ for all $k \ge k^*$.  If $\rho_{k+1} (H_1 \times H_2) = \rho_{k+1} (H_2)$, then $\rho_{k+1} (H_1 \times H_2) - \rho_k (H_1 \times H_2) \le \rho_{k+1}(H_2) - \rho_k (H_2) \le M'$. Otherwise, there is $\nu \in [0,k]$ such that $\rho_{k+1}(H_1 \times H_2) = \rho_{k+1-\nu}(H_1)+\rho_{\nu} (H_2)$ whence $\rho_{k+1}(H_1 \times H_2) - \rho_k (H_1 \times H_2) \le \bigl( \rho_{k+1-\nu}(H_1)+\rho_{\nu} (H_2) \big) - \bigl( \rho_{k-\nu}(H_1) + \rho_{\nu}(H_2)   \bigr) = \rho_{k+1-\nu} (H_1) - \rho_{k-\nu}(H_1) \le M'$.

By assumption, by \eqref{structure-2}, and by the  argument above, we infer that
$\rho_{k+1} \bigl( \mathcal I_v^* (H) \bigr) - \rho_{k} \bigl(  \mathcal I_v^* (H) \bigr) \le M'$ for all $k \ge k^*$. This property and the finiteness of the set of distances imply that the set $\mathcal U_k \bigl( \mathcal I_v^* (H) \bigr)$ have the asserted structure by \cite[Theorem 4.2]{Ga-Ge09b}.

Equation \eqref{direct-product} shows that the sets $\mathcal U_k \bigl( \mathcal I_v^* (H) \bigr)$ are finite for all $k \in \N$ if and only if the sets $\mathcal U_k \bigl( H_{\mathfrak p} \bigr)$ are finite for all $k \in \N$ and all $\mathfrak p \in \mathcal P^*$. Let $\mathfrak p \in \mathcal P^*$. Then $\widehat{H_{\mathfrak p}}$ is Krull and it is a valuation monoid if and only if it is a discrete valuation monoid. Thus, by Theorem \ref{7.2}.2, all sets $\mathcal U_k \bigl( H_{\mathfrak p} \bigr)$ are finite if and only if all $\widehat{H_{\mathfrak p}}$ are discrete valuation monoids if and only if the map $v$-$\spec (\widehat H) \to v$-$\spec (H)$ is bijective.

3. By \eqref{structure-2},
\[
\mathcal L \bigl( \mathcal I_v^* (H) \bigr) = \bigl\{ \{k\} + \sum_{\mathfrak p \in \mathcal P^*} \mathsf L (a_{\mathfrak p}) \colon k \in \N_0, a_{\mathfrak p} \in (H_{\mathfrak p})_{\red} \bigr\} \,.
\]
Thus sets of lengths of $\mathcal I_v^* (H)$ are finite sumsets of sets of lengths of a free abelian monoid and of finitely many locally tame strongly primary monoids. Therefore, by Theorem \ref{7.2}.3, they are sumsets of AAPs and the claim follows by application of an addition theorem given in \cite[Theorem 4.2.16]{Ge-HK06a}. The fact that the difference $d$ lies in $\Delta \bigl( \mathcal I_v^* (H) \bigr)$  can be seen either from a direct argument or one uses \cite[Theorem 4.5.4]{Ge-HK06a}.
\end{proof}

\smallskip
\noindent
{\bf Acknowledgement.} We would like to thank the reviewers for all their helpful comments.

\providecommand{\bysame}{\leavevmode\hbox to3em{\hrulefill}\thinspace}
\providecommand{\MR}{\relax\ifhmode\unskip\space\fi MR }
\providecommand{\MRhref}[2]{%
  \href{http://www.ams.org/mathscinet-getitem?mr=#1}{#2}
}
\providecommand{\href}[2]{#2}

\end{document}